\documentclass{article}
\usepackage[utf8]{inputenc}
\usepackage{srcltx}
\usepackage{amssymb,amsmath,amsfonts,amsthm}
\usepackage{cite}
\usepackage{verbatim}

\usepackage[english]{babel}
\usepackage{amsmath}
\usepackage{longtable}
\usepackage{tikz}
\usepackage{amssymb}
\usepackage{amsfonts}
\usepackage{textcomp}

\sloppy

\setlength{\headheight}{0mm} \setlength{\headsep}{0mm}
\setlength{\topmargin}{0mm} \setlength{\oddsidemargin}{0mm}
\setlength{\textwidth}{165mm} \setlength{\textheight}{220mm}

\newtheorem{theorem}{Theorem}[section]

\theoremstyle{definition}

\theoremstyle{remark}

\numberwithin{equation}{section}



 
\DeclareMathOperator{\SL}{SL}

 \DeclareMathOperator{\PSp}{PSp}

\DeclareMathOperator{\sgn}{sgn}

\newcommand{\F}{\mathbb{F}}

\newtheorem{lem}{{\bfseries Lemma}}

\newtheorem{prop}{{\bfseries Proposition}}
\newtheorem*{lem*}{{\bfseries Lemma}}
\newcommand*{\No}{\textnumero}

\title{On splitting of the normalizer of a maximal torus in $E_6(q)$}

\author{Alexey Galt and Alexey Staroletov\footnote{This research was supported by the Russian Science Foundation (project no. 14-21-00065).}}

\date{\vspace{-5ex}}
\begin{document}
\newcommand{\Addresses}{{
  \bigskip
  \footnotesize

  A.~Galt, \textsc{Sobolev Institute of Mathematics, Novosibirsk, Russia;}\par\nopagebreak
  \textit{E-mail address: } \texttt{galt84@gmail.com}

  \medskip

  A.~Staroletov, \textsc{Sobolev Institute of Mathematics, Novosibirsk, Russia;}\par\nopagebreak
  \textit{E-mail address: } \texttt{staroletov@math.nsc.ru}
}}


\maketitle
\begin{abstract}
Let $G$ be a finite group of Lie type $E_6$ over $\F_q$ (adjoint or simply connected) and $W$ be the Weyl group of $G$. We describe maximal tori $T$ such that $T$ has a complement in its algebraic normalizer $N(G,T)$. 
It is well known that for each maximal torus $T$ of $G$ there exists an element $w\in W$ such that $N(G,T)/T\simeq C_W(w)$. When $T$ does not have a complement isomorphic to $C_W(w)$, we show that $w$ has a lift in $N(G,T)$ of the same order.
\end{abstract}

\section{Introduction}
Finite groups of Lie type arise from linear algebraic groups as sets of fixed points of a Steinberg endomorphism.
Let $\overline{G}$ be a simple connected linear algebraic group over an algebraically closed field $\overline{\F}_p$ of positive characteristic $p$. Consider a Steinberg endomorphism $\sigma$ and a maximal $\sigma$-invariant torus $\overline{T}$  of $\overline{G}$. It is well known that all maximal tori are conjugate in $\overline{G}$ and a quotient $N_{\overline{G}}(\overline{T})/\overline{T}$ is isomorphic to the Weyl group $W$ of $\overline{G}$. 

The natural question is to describe groups $\overline{G}$, in which $N_{\overline{G}}(\overline{T})$ splits over $\overline{T}$.
A similar question can be formulated for finite groups of Lie type. More precisely,
let $G$ be a finite group of Lie type, that is $O^{p'}(\overline{G}_{\sigma})\leqslant G\leqslant\overline{G}_{\sigma}$. Let $T=\overline{T}\cap G$ be a maximal torus in $G$ and $N(G,T)=N_{\overline{G}}(\overline{T})\cap G$ be the algebraic normalizer of $T$. Then the question is to describe groups $G$ and their maximal tori $T$ such that $N(G,T)$ splits over $T$.

These questions were stated by J.\,Tits in~\cite{Tits}. In the case of algebraic groups it was solved independently in~\cite{AdamsHe} and in~\cite{Galt1,Galt2,Galt3,Galt4}. In the case of finite groups the problem was
studied  for the groups of Lie types $A_n, B_n, C_n$ and $D_n$ in~\cite{Galt2,Galt3,Galt4}.

In this paper we consider finite groups $G$ of Lie type $E_6$ over a finite field $\F_q$ of characteristic $p$. Recall that in this case $G/Z(G)$ is isomorphic to the simple group $E_6(q)$ and either $|Z(G)|=1$ or $|Z(G)|=3$.  There are 25 conjugacy classes of maximal tori in $E_6(q)$ and we enumerate them as in~\cite{DerF}. Let 
$\Delta=\{r_1,r_2,r_3,r_4,r_5,r_6\}$ be a fundamental system of a root system $E_6$ and $r_{14}=r_2+r_4+r_5$, $r_{31}=r_2+r_3+2r_4+2r_5+r_6$, $r_{36}=r_1+2r_2+2r_3+3r_4+2r_5+r_6$. Denote by $w_i$ the elements of the Weyl group $W$ corresponding to reflections in the hyperplanes orthogonal to the roots $r_i$. There is a bijection between conjugacy classes of maximal tori and conjugacy classes of $W$. 
The main result of the paper is the following

\begin{theorem}\label{th1}
Let $G$ be a finite group of Lie type $E_6$ over ${\F}_q$ (adjoint or simply connected) with the Weyl group $W$. Let $T$ be a maximal torus of $G$ corresponding to an element $w$ of $W$ and $N$ its algebraic normalizer. Then $T$ does not have a complement in~$N$ if and only if one of the following claims holds:
\begin{itemize}
  \item[{\em (i)}] $q$ is odd and $w$ is conjugate to one of the following:
$1$, $w_1$, $w_1w_2$, $w_2w_3w_5$, $w_1w_3w_4$, $w_1w_4w_6w_{36}$, $w_1w_4w_6w_3$, $w_1w_4w_6w_3w_{36}$;
  \item[{\em (ii)}] $q\equiv3\pmod4$ and $w$ is conjugate to $w_3w_2w_4w_{14}$.
  \end{itemize}
\end{theorem}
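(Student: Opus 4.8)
The plan is to translate the splitting problem into cohomology and then reduce it to a purely $2$-local, case-by-case computation. Since $N/T\cong C_W(w)$, the torus $T$ has a complement in $N$ if and only if the class $\xi_w\in H^2(C_W(w),T)$ of the extension $1\to T\to N\to C_W(w)\to1$ vanishes. The order $|T|$ equals $\bigl|\det(q\cdot\mathrm{id}-w)\bigr|$ computed on the character lattice, which is congruent to $\pm1$ modulo $p$; hence $p\nmid|T|$ and we may decompose $T=\prod_{\ell\ne p}T_\ell$ into Sylow subgroups, writing $\xi_w=\sum_\ell\xi_{w,\ell}$ with $\xi_{w,\ell}\in H^2(C_W(w),T_\ell)$. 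Now $\xi_{w,\ell}=0$ automatically unless $\ell\mid|C_W(w)|$, hence unless $\ell\in\{2,3,5\}$ since $|W|=2^7\cdot3^4\cdot5$; and because $H^2(C_W(w),T_\ell)$ is an $\ell$-group, $\xi_{w,\ell}=0$ if and only if its restriction to a Sylow $\ell$-subgroup $P_\ell\le C_W(w)$ is zero. So the task reduces to deciding, for each of the $25$ classes and each $\ell\in\{2,3,5\}$ dividing $|C_W(w)|$, whether $\mathrm{res}_{P_\ell}\xi_{w,\ell}=0$.

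I would next eliminate the odd primes and the case of even $q$. If $q$ is even then $q-1$ is odd and $\Phi_d(q)\equiv\Phi_d(0)=1\pmod2$ for $d>1$, so $|T|$ is odd and only $\ell\in\{3,5\}$ can occur. For $\ell$ odd I would show $\xi_{w,\ell}=0$ for every class, following the reduction developed for the groups of types $A_n,B_n,C_n,D_n$ (cf.\ \cite{Galt2,Galt3,Galt4}): the Tits subgroup $\widetilde W$ of $N_{\overline{G}}(\overline T)$, generated by the elements $n_{r_i}=x_{r_i}(1)x_{-r_i}(-1)x_{r_i}(1)$, is $\sigma_w$-stable (for a representative $\dot w\in\widetilde W$) and meets $\overline T$ exactly in the $2$-torsion $\overline T[2]$, so after passing to the $\ell$-part with $\ell$ odd it provides a complement for $P_\ell$ whenever $P_\ell$ is cyclic (the generic situation for $\ell=3,5$), while for the few classes with a non-cyclic Sylow $3$-subgroup — $w=1$, and the classes attached to the subsystems $A_2$ and $A_2+A_2+A_2$ — I would check $H^2(P_3,T_3)=0$ directly from the module structure. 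Since the centre $Z(\overline G)$ has order $1$ or $3$, the simply connected and adjoint tori have canonically isomorphic $2$-parts as $C_W(w)$-modules; hence the $\ell=2$ analysis, and so the final answer, is insensitive to the isogeny type, which is why the statement is uniform.

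The heart of the proof is the case $q$ odd, $\ell=2$. For each class I would record $C_W(w)$ and the $\mathbb{Z}C_W(w)$-module structure of the (co)character lattice $X$, twist by $q$ to obtain the module $T_2=\bigl(X/(q\cdot\mathrm{id}-w)X\bigr)_2$, and examine $\mathrm{res}_{P_2}\xi_{w,2}$. The isomorphism type of $T_2$ depends on $v_2(q-1)$ versus $v_2(q+1)$, i.e.\ on $q\bmod4$, and this is exactly the origin of the arithmetic conditions in the statement: the obstruction for the class $w_3w_2w_4w_{14}$ becomes nontrivial only once $v_2(q+1)\ge2$, that is, $q\equiv3\pmod4$. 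For the classes not appearing in the theorem I would exhibit a complement explicitly: lift a chosen generating set of $C_W(w)$ to $N$ by products of the $n_{r_i}$ (adjusted by torus elements to lie in $N$), compute the torus-valued discrepancies of the defining relations — which reduce to products of the elements $n_{r_i}^2=h_{r_i}(-1)$ — and solve for a torus correction, solvability being governed by the surjectivity of the relevant norm and coboundary maps on $T_2$. For the eight classes of (i) and the one of (ii) I would instead produce a single relation in $C_W(w)$ whose lift falls on a non-coboundary element of $T_2$, proving $\xi_{w,2}\ne0$. I expect this bookkeeping — the module structures, the Sylow $2$-subgroups (of order up to $2^7$), and the split into $q\equiv1$ and $q\equiv3\pmod4$ — to be the principal obstacle; the rest is formal and largely mechanical, naturally carried out with computer assistance for the combinatorics in $W$.

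Finally, for the nine non-splitting classes I would verify separately that $w$ still has a lift of order $\mathrm{ord}(w)$. Here only the cyclic group $\langle w\rangle$ must be lifted, so the obstruction collapses to a single element: the Tits lift $n_w$ satisfies $n_w^{\,\mathrm{ord}(w)}\in\overline T[2]$, and $w$ admits a lift of order $\mathrm{ord}(w)$ if and only if this element is a norm in $T_2$ — a condition much weaker than the vanishing of $\xi_{w,2}$, and one I would confirm by an explicit torus correction in each of the nine cases. This yields the last assertion of the theorem.
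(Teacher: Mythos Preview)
Your cohomological strategy is sound and would ultimately succeed, but it is a genuinely different organisation from the paper's. The paper makes no reduction to a single prime: for each of the $25$ classes it either writes down an explicit complement $K\leqslant N$ by lifting a generating set of $C_W(w)$ to elements $N_i=H_i\cdot(\text{product of }n_{r_j})$ with $H_i\in T$ chosen so that the defining relations of $C_W(w)$ hold on the nose, or, for the nine obstructed classes, assumes a complement exists, expands two or three relations among the lifts via Lemmas~\ref{conjugation} and~\ref{commutator}, and arrives at an equation of the shape $1=-1$ in $T$. That every contradiction is visibly $2$-torsion confirms a posteriori your claim that only $\ell=2$ matters, but the paper never isolates this. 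Your framework buys conceptual clarity---why the answer is isogeny-independent (since $|Z(\overline{G})|\mid 3$, the $2$-parts of the two tori agree as $C_W(w)$-modules) and why only $q\bmod 4$ can enter---while the paper's direct approach sidesteps your odd-prime reduction, which is less automatic than you indicate: the Tits group $\mathcal{T}$ is indeed $\sigma n$-stable, but $\mathcal{T}_{\sigma n}\to C_W(w)$ need not surject (Lang--Steinberg does not apply to the disconnected $\mathcal{T}$), so even for cyclic $P_\ell$ you do not obtain a complement for free, and there are more classes with non-cyclic Sylow $3$-subgroup than the three you name (tori $6$, $10$, $13$, $15$, $22$, $25$ all have $\mathbb{Z}_3\times\mathbb{Z}_3\leqslant C_W(w)$). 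These are repairable bookkeeping issues rather than a fatal gap; once past them, your $\ell=2$ analysis and your treatment of the order-$|w|$ lifts coincide with what the paper carries out by explicit construction.
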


In the end of the paper we illustrate this result in Table~\ref{table} with some useful information about the maximal tori. 
In fact, when $T$ has a complement in~$N$ we construct it explicitly.

J.\,Adams and X.\,He~\cite{AdamsHe} considered a related problem. Namely, it is natural to ask about the orders of lifts of $w\in W$ to $N_{\overline{G}}(\overline{T})$. They noticed that if $d$ is the order of $w$ then the minimal order of a lift of $w$ has order $d$ or $2d$, but it can be a subtle question which holds. Clearly, if $N_{\overline{G}}(\overline{T})$ splits over $\overline{T}$ then the minimal order is equal to $d$. In the case of Lie type $E_6$ it was proved that the normalizer does not split and they showed that the minimal order of lifts of $w$ is $d$ if $w$ belongs to so-called regular or elliptic conjugacy classes.
We prove that the minimal order of lifts of $w$ is always equal to the order $w$ in this case. Moreover, we construct these lifts in the corresponding algebraic normalizers.

\begin{theorem}\label{th2}
Let $G$ be a finite group of Lie type $E_6$ over ${\F}_q$ (adjoint or simply connected)  with the Weyl group $W$. Let $T$ be a maximal torus of $G$ corresponding to an element $w$ of $W$ and $N$ its algebraic normalizer.
Then there exists a lift for $w$ in $N$ with the same order.
\end{theorem}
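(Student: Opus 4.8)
The plan is to split the proof along the dichotomy of Theorem~\ref{th1}. Suppose first that $T$ does have a complement $C$ in $N$. The isomorphism $N/T\simeq C_W(w)$ is induced by the natural projection $N\to W=N_{\overline{G}}(\overline{T})/\overline{T}$, whose kernel on $N$ is $N\cap\overline{T}=T$; hence the restriction of this projection to $C$ is an isomorphism $C\xrightarrow{\sim}C_W(w)$. Since $w$ is central in $C_W(w)$, the element $n\in C$ lying over $w$ satisfies $o(n)=o(w)$, so $n$ is the desired lift. Likewise, if $p=2$ then $h_r(-1)=h_r(1)=1$, the Chevalley lifts $n_r(1)$ are involutions, and the Tits lift $\dot w$ of any $w$ already has order $o(w)$. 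By Theorem~\ref{th1} it therefore remains to produce, for each of the nine conjugacy classes listed in (i)--(ii) and the relevant odd values of $q$, an explicit lift of $w$ of order $d:=o(w)$; note that $d\in\{1,2,4\}$ for these classes.

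For the remaining classes I would use the standard twisted-Frobenius description of $N$. Fixing a representative $w$, a reduced expression $w=r_{i_1}\cdots r_{i_k}$, and the Tits lift $\dot w=n_{r_{i_1}}(1)\cdots n_{r_{i_k}}(1)$, a Lang-theorem element $g$ with $g^{-1}\sigma(g)=\dot w$ conjugates $N$ onto $N_{\overline{G}}(\overline{T})^{F_w}$, where $F_w=\operatorname{ad}(\dot w)\circ\sigma$. As $\sigma$ fixes every $n_r(1)$, we get $\sigma(\dot w)=\dot w$, so $\dot w\in N_{\overline{G}}(\overline{T})^{F_w}$ is a lift of $w$ and every lift of $w$ is $\dot w t$ with $t\in\overline{T}^{F_w}$. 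Letting $w$ act on $\overline{T}$ by conjugation by $\dot w$, one computes $(\dot w t)^{d}=\dot w^{\,d}\cdot\prod_{j=0}^{d-1}{}^{w^{j}}\!t$, in which $\dot w^{\,d}\in\overline{T}^{F_w}$ is $w$-invariant and, by Tits' quadratic relations, a product of the involutions $h_r(-1)$, hence itself $2$-torsion. Thus $w$ has a lift of order $d$ if and only if $\dot w^{-d}$ lies in the image of the relative norm $\mathcal N_w\colon\overline{T}^{F_w}\to\overline{T}^{F_w}$, $t\mapsto\prod_{j=0}^{d-1}{}^{w^{j}}\!t$.

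To check this for each of the nine classes I would first compute $\dot w^{\,d}$ from the Chevalley relations ($n_r(1)^2=h_r(-1)$, the braid relations, and $\dot w\,h_r(s)\,\dot w^{-1}=h_{w(r)}(s)$). Then, since $q$ is a power of $p$, $\det(qw-1)$ is prime to $p$ and $\overline{T}^{F_w}\simeq Y/(qw-1)Y$, where $Y$ is the cocharacter lattice of $\overline{T}$, and under this identification $\mathcal N_w$ is multiplication by $1+w+\cdots+w^{d-1}$; so $\dot w^{-d}\in\operatorname{Im}\mathcal N_w$ becomes an explicit congruence in $Y/(qw-1)Y$ that can be verified by hand. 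The point at which the residue of $q$ enters is that we may correct $\dot w$ by a torus element that need not be $2$-torsion: for $w=w_1$, for instance, one checks directly that $n_{r_1}(1)\cdot h_{r_3}(-1)\,h_{r_1}(\mu)$ with $\mu^{q+1}=-1$ is an involution lying over $w_1$ in $N$ for every odd $q$, even though $T$ has no complement when $q$ is odd; the other classes are handled in the same spirit.

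The main obstacle is precisely this last family of computations. Evaluating $\dot w^{\,d}$ for the order-$4$ representatives and, above all, solving $\mathcal N_w(t)=\dot w^{-d}$ inside $\overline{T}^{F_w}$ rather than in $\overline{T}$ is delicate, since the $F_w$-invariance constraint couples to the arithmetic of $q$ and to the isogeny type of $G$ (whether $Y$ is the coroot or the coweight lattice), so that in principle the adjoint and simply connected cases, and several residues of $q$, must be treated separately. What makes every case succeed---in contrast with the existence of a full complement, obstructed by a class in $H^2(C_W(w),\overline{T}^{F_w})$ that genuinely fails to vanish for the listed $w$---is that here we lift only the single element $w$, which is central in $C_W(w)$: the relevant obstruction lives in the much smaller group $\widehat{H}^{0}(\langle w\rangle,\overline{T}^{F_w})$, and one verifies it vanishes throughout.
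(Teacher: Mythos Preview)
Your reduction is exactly the paper's: when $T$ has a complement, the isomorphic copy of $C_W(w)$ inside $N$ supplies the lift, and for $q$ even the Tits section already does the job; the paper dispatches both in one sentence at the start of Section~4.2. For the nine residual classes the paper proceeds by bare-hands construction: for each $w$ it writes down an explicit $H_1\in\overline{T}$, checks $H_1^{\sigma n}=H_1$ so that $H_1\dot w\in N$, and then evaluates $(H_1\dot w)^d$ coordinatewise via Lemma~\ref{conjugation}. In two cases (Tori~8 and~14) already $\dot w^{\,d}=1$ and no correction is needed; in the others the correcting $H_1$ involves a scalar $\zeta$ satisfying $\zeta^{q+1}=-1$, $\zeta^{2(q+1)}=-1$, or $\zeta^{q^3+q^2+q+1}=-1$, found ad hoc.

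Your cohomological reformulation is correct and genuinely different: recasting the question as the vanishing of the class of $\dot w^{\,d}$ in $\widehat H^{\,0}(\langle w\rangle,\overline{T}^{F_w})$, together with $\overline{T}^{F_w}\simeq Y/(qw-1)Y$, turns the nine cases into integer linear algebra and explains \emph{why} the single lift exists even when a full complement does not --- one is lifting only the cyclic $\langle w\rangle$, not all of $C_W(w)$, so the obstruction drops from an $H^2$ of $C_W(w)$ to a Tate $\widehat H^{\,0}$ of $\langle w\rangle$. That is a cleaner viewpoint than the paper's.

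The gap, which you flag yourself, is that the proposal never performs the verification. For the four classes with $d=4$ (Tori~7, 11, 14, 16) one must actually compute $\dot w^{\,4}$ --- a specific product of $h_r(-1)$'s depending on the chosen structure constants --- and then exhibit $t\in\overline{T}$ with $\mathcal N_w(t)=\dot w^{-4}$ \emph{and} $t^{qw}=t$; this is precisely the work the paper carries out, merely in different coordinates. Your sentence ``one verifies it vanishes throughout'' is the content of the theorem for these cases, not its proof. The one example you give ($w=w_1$) is essentially the paper's lift for Torus~2, but the remaining seven nontrivial classes each require a separate short computation that you have not supplied, and the adjoint/simply-connected distinction you mention is also left untouched (the paper handles it by cubing all torus coordinates). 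As a plan the approach is sound and arguably more illuminating than the paper's; as a proof it stops just before the substance.
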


This paper is organized as follows. In Section 2 we recall notation and basic facts about algebraic groups. In Section 3 we prove auxiliary results and explain how we use MAGMA in the proofs. Section 4 is devoted to the proof of the main results. It consists of two subsections. First, we consider the maximal tori that do not have a complement. In these cases we also prove the existence of the lifts of the corresponding elements of $W$. In the second subsection we construct the complements for the remaining tori. 

\section{Notation and preliminary results}

By $q$ we always denote some power of a prime $p$, $\overline{\F}_p$ is an algebraic closure of a finite field $\F_p$. The symmetric group on $n$ elements is denoted by $S_n$. Let $T$ be a normal subgroup of a group $N$. Then {\em $T$ has a complement in $N$} (or {\em $N$ splits over $T$}) if there exists a subgroup $H$ such that $N=T H$ and $T\cap H=1$.

By $\overline{G}$ we denote a simple connected linear algebraic group over $\overline{\F}_p$. 
A surjective endomorphism $\sigma$ of $\overline{G}$ is called a {\em Steinberg endomorphism}, if the set of $\sigma$-stable points $\overline{G}_\sigma$ is finite \cite[Definition~1.15.1]{GorLySol}. Any group $G$ satisfying $O^{p'}(\overline{G}_\sigma)\leqslant G\leqslant\overline{G}_\sigma$, is called a {\em finite group of Lie type}.
It is well known that $\overline{G}$ always has a $\sigma$-stable maximal torus, which is denoted by $\overline{T}$. All maximal tori are conjugate to $\overline{T}$ in $\overline{G}$.
If $\overline{T}$ is a $\sigma$-stable maximal torus of $\overline{G}$ then $T=\overline{T}\cap G$ is called a {\em maximal torus} of $G$. If $G_2\unlhd G_1\unlhd G$ then the image of $T\cap G_1$ in $G_1/G_2$ is called a {\em maximal torus} of $G_1/G_2$. A group $N_{\overline{G}}(\overline{T})\cap G$ is denoted by $N(G,T)$ or just $N$. Notice that $N(G,T)\leqslant N_G(T)$, but the equality is not true in general. For example, let $G=\SL_n(2)$  then the subgroup of diagonal matrices $T$ of $G$ is trivial, hence, $N_G(T)=G$. But $G=(\SL_n(\overline{\F}_2))_\sigma$, where $\sigma$ is a Frobenius map $(\sigma : (a_{i,j}) \mapsto (a_{i,j}^2))$. Then $T=\overline{T}_\sigma$, where $\overline{T}$ is the subgroup of diagonal matrices in  $\SL_n(\overline{\F}_2)$. Thus $N(G,T)$  is the group of monomial matrices in $G$. That is why for the group $N(G,T)$ we use the term {\em algebraic normalizer}.

By $\overline{N}$ and $W$ we denote the normalizer $N_{\overline{G}}(\overline{T})$ and the Weyl group $\overline{N}/\overline{T}$, and $\pi$ stands for the natural homomorphism from $\overline{N}$ onto $W$. Define the action of $\sigma$ on $W$ in the natural way. Elements $w_1, w_2\in W$ are called $\sigma$-conjugate if $w_1=(w^{-1})^{\sigma}w_2w$ for some $w\in W$.

\begin{prop}{\em\cite[Propositions 3.3.1, 3.3.3]{Car}}\label{torus}.
A torus $\overline{T}^g$ is $\sigma$-stable if and only if $g^{\sigma}g^{-1}\in\overline{N}$. The map $\overline{T}^g\mapsto\pi(g^{\sigma}g^{-1})$ determines a bijection between the $G$-classes of $\sigma$-stable maximal tori of $\overline{G}$ and the $\sigma$-conjugacy classes of $W$.
\end{prop}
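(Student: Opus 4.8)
The plan is to establish the two assertions separately, using only that $\overline{T}$ is $\sigma$-stable together with the Lang--Steinberg theorem for the connected group $\overline{G}$. Throughout I write $\overline{T}^g=g^{-1}\overline{T}g$ and $g^\sigma=\sigma(g)$.

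For the first claim I would simply compute the action of $\sigma$ on a conjugate torus. Since $\overline{T}^\sigma=\overline{T}$, for any $g\in\overline{G}$ one has $(\overline{T}^g)^\sigma=(g^\sigma)^{-1}\overline{T}^\sigma g^\sigma=\overline{T}^{g^\sigma}$. Conjugating by $g^{-1}$, the torus $\overline{T}^g$ is $\sigma$-stable if and only if $\overline{T}^{g^\sigma g^{-1}}=\overline{T}$, that is, if and only if $g^\sigma g^{-1}\in N_{\overline{G}}(\overline{T})=\overline{N}$.

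Next I would check that $\overline{T}^g\mapsto\pi(g^\sigma g^{-1})$ descends to a well-defined map on $G$-classes with values in $\sigma$-conjugacy classes of $W$. Two elements define the same torus exactly when they differ on the left by $\overline{N}$, so replacing $g$ by $ng$ with $n\in\overline{N}$ replaces $g^\sigma g^{-1}$ by $n^\sigma(g^\sigma g^{-1})n^{-1}$; applying $\pi$ gives $\pi(n)^\sigma\,\pi(g^\sigma g^{-1})\,\pi(n)^{-1}$, which is $\sigma$-conjugate to $\pi(g^\sigma g^{-1})$. On the other hand, conjugating the torus by $h\in G=\overline{G}_\sigma$ replaces $g$ by $gh$, and since $h^\sigma=h$ one computes $(gh)^\sigma(gh)^{-1}=g^\sigma g^{-1}$ unchanged. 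Hence the assigned $\sigma$-conjugacy class depends only on the $G$-class of the torus, and the map is well defined. Surjectivity onto $\sigma$-conjugacy classes is then immediate: given $w\in W$ choose $n\in\overline{N}$ with $\pi(n)=w$ and use Lang--Steinberg to find $g$ with $g^\sigma g^{-1}=n$; by the first claim $\overline{T}^g$ is $\sigma$-stable and maps to $w$.

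The substantive step is injectivity, and this is where I expect the main difficulty. Suppose $\overline{T}^{g_1}$ and $\overline{T}^{g_2}$ are $\sigma$-stable with $\pi(g_i^\sigma g_i^{-1})$ $\sigma$-conjugate. Using the representative freedom above, I would first multiply $g_1$ on the left by a suitable element of $\overline{N}$ (which leaves $\overline{T}^{g_1}$ unchanged) so as to arrange $\pi(g_1^\sigma g_1^{-1})=\pi(g_2^\sigma g_2^{-1})=w$ as elements of $W$; then $n_1=g_1^\sigma g_1^{-1}$ and $n_2=g_2^\sigma g_2^{-1}$ satisfy $n_2=t n_1$ for some $t\in\overline{T}=\ker\pi$. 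To conjugate one torus onto the other inside $G$ I look for $h=g_1^{-1}s^{-1}g_2$ with $s\in\overline{T}$; the requirement $h^\sigma=h$ then reduces, after a short manipulation, to the equation $t=s^\sigma\,(n_1 s^{-1}n_1^{-1})$ in $\overline{T}$. Writing $\psi$ for conjugation by $n_1$ and using that $\overline{T}$ is abelian, this is the Lang equation $\theta(s)s^{-1}=\psi^{-1}(t)$ for the endomorphism $\theta=\psi^{-1}\sigma$ of $\overline{T}$. The point is that $\theta$ is again a Steinberg endomorphism of the connected torus $\overline{T}$: on the cocharacter lattice $\sigma$ acts as $q$ times a finite-order automorphism and $\psi$ acts through the finite-order element $w$, so $\theta-\mathrm{id}$ acts as $q\rho'-\mathrm{id}$ with $\rho'$ of finite order, which has nonzero determinant for $q\geq2$; hence the fixed-point group $\overline{T}^\theta$ is finite. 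Lang--Steinberg therefore supplies $s$, and the resulting $h$ lies in $\overline{G}_\sigma$ and conjugates $\overline{T}^{g_1}$ to $\overline{T}^{g_2}$. The crux throughout is precisely this final reduction to a twisted Lang equation on the connected group $\overline{T}$, which is exactly where connectedness and the Lang--Steinberg theorem cannot be dispensed with.
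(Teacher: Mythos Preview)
The paper does not supply its own proof of this proposition; it is quoted from Carter with a bare citation. Your argument is precisely the standard Lang--Steinberg proof one finds in that reference: the first assertion is a direct computation, well-definedness and surjectivity are routine, and injectivity is reduced to a twisted Lang equation on the connected torus $\overline{T}$, solvable because the composite $\psi^{-1}\sigma$ still has finite fixed-point group. The argument is correct.

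Two small remarks. First, you take $\overline{T}^g=g^{-1}\overline{T}g$, opposite to the paper's declared convention $x^y=yxy^{-1}$; in fact your choice is the one under which the condition $g^\sigma g^{-1}\in\overline{N}$ comes out as stated, so your internal consistency is fine even if it clashes with the paper's notation elsewhere. Second, in the injectivity step you produce $h\in\overline{G}_\sigma$ and identify this with $G$; the proposition as phrased allows any $G$ with $O^{p'}(\overline{G}_\sigma)\leqslant G\leqslant\overline{G}_\sigma$, so in full generality one owes a sentence on why $\overline{G}_\sigma$-conjugacy and $G$-conjugacy of $\sigma$-stable maximal tori agree. For the groups actually treated in the paper (adjoint or simply connected $E_6(q)$) one has $G=\overline{G}_\sigma$, so the point is moot there.
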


As follows from Proposition~\ref{torus} the cyclic structure of a torus $(\overline{T}^g)_{\sigma}$ in $G$ and of the corresponding tori
in the sections of $G$ is determined only by a $\sigma$-conjugacy class of the element $\pi(g^{\sigma}g^{-1})$.

\begin{prop}{\em\cite[Lemma 1.2]{ButGre}}\label{prop2.5}.
Let $n=g^{\sigma}g^{-1}\in\overline{N}$. Then $(\overline{T}^g)_\sigma=(\overline{T}_{\sigma n})^g$, where $n$ acts on $\overline{T}$ by conjugation.
\end{prop}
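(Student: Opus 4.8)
\textbf{Proof proposal for Proposition~\ref{prop2.5}.}
The plan is to prove the set equality $(\overline{T}^g)_\sigma=(\overline{T}_{\sigma n})^g$ by a direct double‑inclusion argument, unwinding both sides through the conjugation isomorphism $x\mapsto x^g=g^{-1}xg$ and keeping careful track of how $\sigma$ interacts with conjugation. The key algebraic fact I would isolate first is that $\sigma$ is an endomorphism of $\overline{G}$, so it is multiplicative: $(ab)^\sigma=a^\sigma b^\sigma$ for all $a,b\in\overline{G}$, and in particular $(g^{-1}xg)^\sigma=(g^\sigma)^{-1}x^\sigma g^\sigma$. This is the only property of $\sigma$ I will need, together with the hypothesis $n=g^\sigma g^{-1}$, equivalently $g^\sigma=ng$.

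First I would fix notation for the two twisted actions. On the left, $(\overline{T}^g)_\sigma$ denotes the elements of the torus $\overline{T}^g=\overline{T}^g$ fixed by the ambient $\sigma$, i.e.\ $\{t\in\overline{T}^g : t^\sigma=t\}$. On the right, $\overline{T}_{\sigma n}$ denotes the fixed points of $\overline{T}$ under the \emph{twisted} endomorphism $\sigma n$, where $n$ acts on $\overline{T}$ by conjugation; explicitly $\overline{T}_{\sigma n}=\{s\in\overline{T} : n^{-1}s^\sigma n=s\}$ (I would state at the outset the convention I use for the map $\sigma n$, namely $s\mapsto n^{-1}s^\sigma n$, and note that this lands in $\overline{T}$ because $n$ normalizes $\overline{T}$ and $\overline{T}$ is $\sigma$‑stable). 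The proof then consists of showing that conjugation by $g$ carries $\overline{T}_{\sigma n}$ bijectively onto $(\overline{T}^g)_\sigma$.

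The central computation is a single chain of equivalences. Take $s\in\overline{T}$ and set $t=s^g=g^{-1}sg\in\overline{T}^g$. Using multiplicativity of $\sigma$ and $g^\sigma=ng$, I compute
\[
t^\sigma=(g^{-1}sg)^\sigma=(g^\sigma)^{-1}s^\sigma g^\sigma=(ng)^{-1}s^\sigma(ng)=g^{-1}\bigl(n^{-1}s^\sigma n\bigr)g=\bigl(n^{-1}s^\sigma n\bigr)^g.
\]
Hence $t^\sigma=t$ if and only if $(n^{-1}s^\sigma n)^g=s^g$, and since conjugation by $g$ is injective this holds if and only if $n^{-1}s^\sigma n=s$, i.e.\ if and only if $s\in\overline{T}_{\sigma n}$. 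Thus $t=s^g\in(\overline{T}^g)_\sigma\iff s\in\overline{T}_{\sigma n}$, which gives both inclusions simultaneously and shows that $x\mapsto x^g$ restricts to a bijection $\overline{T}_{\sigma n}\to(\overline{T}^g)_\sigma$; reading off the image yields $(\overline{T}^g)_\sigma=(\overline{T}_{\sigma n})^g$.

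The only genuinely delicate point, and the one I would state explicitly rather than gloss over, is well‑definedness of the twisted action: one must check that $\overline{T}^g$ really is $\sigma$‑stable (so that $(\overline{T}^g)_\sigma$ makes sense as the fixed points of $\sigma$ on a $\sigma$‑invariant torus) and that $\sigma n$ maps $\overline{T}$ into itself. The first follows from Proposition~\ref{torus}, which guarantees $\overline{T}^g$ is $\sigma$‑stable precisely because $g^\sigma g^{-1}=n\in\overline{N}$; the second follows because $n\in\overline{N}=N_{\overline{G}}(\overline{T})$ normalizes $\overline{T}$ and $\overline{T}^\sigma=\overline{T}$. Once these two stability facts are in place, the displayed computation is forced and no further case analysis is needed, so I expect the main obstacle to be purely bookkeeping: pinning down the sign/side conventions for $\sigma n$ consistently so that the identity $g^\sigma=ng$ slots into the conjugation formula exactly as above.
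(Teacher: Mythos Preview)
Your argument is correct. The paper itself does not prove this proposition at all: it is quoted verbatim from \cite[Lemma~1.2]{ButGre} and used as a black box, so there is no ``paper's own proof'' to compare against. What you have written is exactly the standard one-line verification that underlies the cited lemma, and every step is sound.

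One convention warning. The paper explicitly adopts the \emph{left} conjugation convention $x^{y}=yxy^{-1}$ (see the paragraph right after the Dynkin diagram), whereas you work throughout with $x^{g}=g^{-1}xg$. Under the paper's convention one has $\overline{T}^{g}=g\overline{T}g^{-1}$ and the twisted map reads $s^{\sigma n}=n\,s^{\sigma}\,n^{-1}$; the same computation then goes through with the roles of left and right multiplication swapped (write $t=gsg^{-1}$ and use $g^{\sigma}=ng$ to get $t^{\sigma}=n(gs^{\sigma}g^{-1})n^{-1}$, etc.). Since you were careful to state your conventions at the outset, this is only a cosmetic mismatch, not a mathematical gap --- but if you intend the write-up to slot into this paper you should flip to the $x^{y}=yxy^{-1}$ convention so that $\overline{T}^{g}$ and $H^{\sigma n}$ agree with how they are used in Lemma~\ref{normalizer} and throughout Section~4.
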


\begin{prop}{\em\cite[Proposition 3.3.6]{Car}}\label{normalizer}.
Let $g^{\sigma}g^{-1}\in\overline{N}$ and $\pi(g^{\sigma}g^{-1})=w$. Then $$(N_{\overline{G}}({\overline{T}}^g))_{\sigma}/({\overline{T}}^g)_{\sigma}\simeq C_{W,\sigma}(w)=\{x\in W~|~(x^{-1})^{\sigma}wx=w\}.$$
\end{prop}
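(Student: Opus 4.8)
The plan is to reduce the twisted situation (the torus $\overline{T}^g$ together with its conjugated normalizer) to the original torus $\overline{T}$ equipped with a suitably modified endomorphism, and then to take fixed points in the defining extension $1\to\overline{T}\to\overline{N}\xrightarrow{\pi}W\to1$. First I would set $n=g^{\sigma}g^{-1}$, which lies in $\overline{N}$ by hypothesis (this is precisely the condition from Proposition~\ref{torus} making $\overline{T}^g$ be $\sigma$-stable) and satisfies $\pi(n)=w$. Since $N_{\overline{G}}(\overline{T}^g)=\overline{N}^g$, the conjugation map $c_g\colon x\mapsto g^{-1}xg$ is an isomorphism $\overline{N}\to N_{\overline{G}}(\overline{T}^g)$. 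Using $g^{\sigma}=ng$, a direct computation gives, for $x\in\overline{N}$,
$$(g^{-1}xg)^{\sigma}=(g^{\sigma})^{-1}x^{\sigma}g^{\sigma}=g^{-1}(n^{-1}x^{\sigma}n)g,$$
so that $c_g$ intertwines $\sigma$ on $\overline{N}^g$ with the endomorphism $F\colon x\mapsto n^{-1}x^{\sigma}n$ on $\overline{N}$. Consequently $c_g$ restricts to isomorphisms $\overline{T}_F\xrightarrow{\sim}(\overline{T}^g)_{\sigma}$ and $\overline{N}_F\xrightarrow{\sim}(N_{\overline{G}}(\overline{T}^g))_{\sigma}$, and it suffices to prove $\overline{N}_F/\overline{T}_F\simeq C_{W,\sigma}(w)$.

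Second, I would identify the automorphism $F$ induces on $W=\overline{N}/\overline{T}$. Writing $\bar{x}=\pi(x)$ and applying $\pi$, the relations $\pi(n)=w$ and $\pi(x^{\sigma})=\bar{x}^{\sigma}$ give $\pi(F(x))=w^{-1}\bar{x}^{\sigma}w=:\phi(\bar{x})$. A short manipulation shows that $\phi(\bar{x})=\bar{x}$, i.e. $w^{-1}\bar{x}^{\sigma}w=\bar{x}$, is equivalent to $(\bar{x}^{-1})^{\sigma}w\bar{x}=w$. Hence the fixed-point subgroup $W^{\phi}$ coincides exactly with $C_{W,\sigma}(w)=\{x\in W\mid(x^{-1})^{\sigma}wx=w\}$, which is the group on the right-hand side of the claim.

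Third, I would take $F$-fixed points in the $F$-equivariant exact sequence $1\to\overline{T}\to\overline{N}\xrightarrow{\pi}W\to1$, obtaining an exact sequence $1\to\overline{T}_F\to\overline{N}_F\xrightarrow{\pi}W^{\phi}$. The kernel of $\pi$ on $\overline{N}_F$ is plainly $\overline{N}_F\cap\overline{T}=\overline{T}_F$, so the whole statement reduces to surjectivity of $\pi\colon\overline{N}_F\to W^{\phi}$. Given $\bar{x}\in W^{\phi}$, I would lift it to some $x\in\overline{N}$; then $u:=xF(x)^{-1}$ lies in $\overline{T}$ because $\pi(xF(x)^{-1})=\bar{x}\,\phi(\bar{x})^{-1}=1$, and producing $t\in\overline{T}$ with $tx\in\overline{N}_F$ amounts, via $F(tx)=F(t)F(x)$, to solving the equation $t^{-1}F(t)=u$ in $\overline{T}$.

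The main obstacle is exactly this last solvability. Restricted to the connected torus $\overline{T}$, the map is $F(t)=n^{-1}t^{\sigma}n$, that is, $\sigma$ followed by conjugation by $n\in\overline{N}$; this is again a surjective endomorphism whose fixed-point set $\overline{T}_F\simeq(\overline{T}^g)_{\sigma}$ is finite, so $F$ is a Steinberg endomorphism of $\overline{T}$. The Lang--Steinberg theorem then applies to the connected group $\overline{T}$, giving surjectivity of the Lang map $t\mapsto t^{-1}F(t)$ and hence a solution $t$ of $t^{-1}F(t)=u$. This proves that $\pi\colon\overline{N}_F\to W^{\phi}$ is onto, yielding $\overline{N}_F/\overline{T}_F\simeq W^{\phi}=C_{W,\sigma}(w)$, and transporting back through $c_g$ establishes $(N_{\overline{G}}(\overline{T}^g))_{\sigma}/(\overline{T}^g)_{\sigma}\simeq C_{W,\sigma}(w)$, as required.
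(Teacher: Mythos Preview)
Your argument is correct and is essentially the standard proof of this fact (the one Carter gives in the cited reference): transport $\sigma$ on $\overline{N}^g$ back to the twisted Frobenius $F\colon x\mapsto n^{-1}x^{\sigma}n$ on $\overline{N}$, read off the induced automorphism $\phi$ on $W$, identify $W^{\phi}$ with $C_{W,\sigma}(w)$, and close the exact sequence using Lang--Steinberg on the connected torus $\overline{T}$. The paper itself offers no proof of this proposition; it is simply quoted from \cite[Proposition~3.3.6]{Car} as background.

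One small point of friction: the paper adopts Carter's convention $x^{y}=yxy^{-1}$, so $\overline{T}^{g}=g\overline{T}g^{-1}$ and $N_{\overline{G}}(\overline{T}^{g})=g\overline{N}g^{-1}$. Your transport map $c_g\colon x\mapsto g^{-1}xg$ is written in the opposite convention. Nothing in the mathematics changes---with the paper's convention one would instead set $n'=g^{-1}g^{\sigma}$, use $c_g(x)=gxg^{-1}$, and obtain $F(x)=n'x^{\sigma}(n')^{-1}$, leading to the same $\phi$ on $W$ and the same conclusion---but if you intend your write-up to slot into this paper you should align the conjugation convention with the ambient one.
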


From now on, we suppose that $\overline{G}$ has Lie type $E_6$.
Then the Weyl group $W$ has order $2^{7}\cdot3^4\cdot5$
and is isomorphic to the group $\PSp_4(3):2$ (in the notation of~\cite{Atlas}).
Let $\Phi$ and $\Pi=\{r_1,r_2,r_3,r_4,r_5,r_6\}$ be a set of positive and fundamental roots of a root system $E_6$, respectively. The Dynkin diagram of $E_6$ has the form

\begin{picture}(330,50)(-110,-30)
\put(0,0){\line(1,0){50}} \put(50,0){\line(1,0){50}}
\put(100,0){\line(1,0){50}} \put(150,0){\line(1,0){50}}
\put(0,0){\circle*{6}} \put(50,0){\circle*{6}}
\put(100,0){\circle*{6}} \put(150,0){\circle*{6}}
\put(200,0){\circle*{6}}
\put(100,0){\line(0,-1){20}}
\put(100,-20){\circle*{6}}
\put(0,-10){\makebox(0,0){$r_1$}}
\put(50,-10){\makebox(0,0){$r_3$}}
\put(107,-10){\makebox(0,0){$r_4$}}
\put(150,-10){\makebox(0,0){$r_5$}}
\put(200,-10){\makebox(0,0){$r_6$}}
\put(105,-30){\makebox(0,0){$r_2$}}
\end{picture}\\

Following~\cite{Car}, we write $x^y=yxy^{-1}$ and $[x,y]=y^xy^{-1}$. 
It is well known \cite{Car} that
\begin{center}
$\overline{T}=\langle h_r(\lambda)~|~r\in\Phi,\lambda\in \overline{\F}_p^*\rangle, \quad
\overline{N}=\langle \overline{T},n_r~|~r\in\Phi\rangle$,
\end{center}
where
$$
n_r(1)=n_r,\quad
n_r^2=h_r(-1),\quad
h_r(\lambda)=n_r(\lambda)n_r(-1).$$
For simplicity of notation, we write $h_r$ for $h_r(-1)$. If $r=r_i$, then $h_i$ stands for $h_{r_i}$ and $n_i$ stands for $n_{r_i}$.
Every element $H$ of $\overline{T}$ can be written in the form $H=\prod\limits_{i=1}^6h_{r_i}(\lambda_i)$.
Then the element $H$ is uniquely determined by $\lambda_1,\lambda_2,\lambda_3,\lambda_4,\lambda_5,\lambda_6$, and we
write $H = (\lambda_1,\lambda_2,\lambda_3,\lambda_4,\lambda_5,\lambda_6)$.

The group $\mathcal{T}=\langle n_r~|~ r\in\Pi\rangle$ is called {\it the Tits group}. Let $\mathcal{H}=\overline{T}\cap\mathcal{T}$. It is known
that $\mathcal{H}=\langle h_r~|~r\in\Pi\rangle$ and so $\mathcal{H}$ is an elementary abelian group such that $\mathcal{T}/\mathcal{H}\simeq W$. Observe that if $p=2$ then $h_r=1$ for every $r\in\Pi$, in particular, $\mathcal{H}=1$ and $\mathcal{T}\simeq W$. This implies the assertions of the both theorems, when $q$ is even (for details see Section 4.2). 

Let $\xi\in\overline{\F}_p$ such that $\xi^3=1$. According to Table $1.12.6$ in~\cite{GorLySol} the centre
$Z(E_6(\overline{\F}_p))$ of the simply connected group $E_6(\overline{\F}_p)$ is generated by the element
$z=h_{r_1}(\xi)h_{r_3}(\xi^2)h_{r_5}(\xi)h_{r_6}(\xi^2)$ of order 3.

According~\cite[Theorem 7.2.2]{Car} we have:
\begin{center}
$n_s n_r n_s^{-1}=h_{w_s(r)}(\eta_{s,r})n_{w_s(r)}=
n_{w_s(r)}(\eta_{s,r}),\quad \eta_{s,r}=\pm1,$
\end{center}
\begin{center}
$n_s h_r(\lambda)n_s^{-1}=h_{w_s(r)}(\lambda).$
\end{center}

We choose values of $\eta_{r,s}$ as follows. 
Let $r\in\Phi$ and $r=\sum\limits_{i=1}^6\alpha_i r_i$. The sum of the coefficients $\alpha_1+\alpha_2+\alpha_3+\alpha_4+\alpha_5+\alpha_6$ 
is called the height of $r$. Following to \cite{Vavilov}, we fix the following total ordering of positive roots: we write $r\prec s$ if either $h(r)<h(s)$ or 
$h(r)=h(s)$ and the first nonzero coordinate of $s-r$ is positive.
The table of positive roots with respect to this ordering can be found in \cite{Vavilov}.

Recall that a pair of positive roots $(r,s)$ is called {\it special} if $r+s\in\Phi$ and $r\prec s$. 
A pair $(r,s)$ is called {\it extraspecial} if it is special and for any special pair $(r_1, s_1)$ such that $r+s=r_1+s_1$ one has $r\preccurlyeq s$. Let $N_{r,s}$ be the structure constants of the corresponding simple Lie algebra. Then the values $N_{r,s}$ may be taken arbitrarily at the extraspecial pairs and then all other structure constants are uniquely determined \cite[Proposition~4.2.2]{Car}. In our case, we choose $\sgn(N_{r,s})=+$ for all extraspecial pairs $(r,s)$.
Values of the structure constants for all pairs can be found in \cite{Vavilov}.
The numbers $\eta_{r,s}$ are uniquely determined by the structure constants \cite[Proposition~6.4.3]{Car}.

\section{Preliminaries: calculations}

We use MAGMA to calculate products of elements in $\overline{N}$. 
All calculations can be performed using online Magma Calculator \cite{MC} as well.
At the moment it uses Magma V2.23-10. We use the following preparatory commands:\newline
$L:=LieAlgebra("E6", Rationals());$\newline
$R:=RootDatum(L);$\newline
$B:=ChevalleyBasis(L);$

The following command produces the list of extraspecial pairs and signs of the corresponding structure constants:\newline
{\small $x, y, h:= ChevalleyBasis(L); IsChevalleyBasis(L, R, x, y, h);$\newline
$[\langle1,3,1\rangle$, $\langle1,9,1\rangle$, $\langle1,13,1\rangle$, $\langle1,15,1\rangle$, $\langle1,19,1\rangle$, $\langle1,21,1\rangle$, $\langle1,24,1\rangle$, $\langle1,25,1\rangle$, $\langle1,28,1\rangle$, 
$\langle1,31,1\rangle$, $\langle2,4,1\rangle$, $\langle2,9,1\rangle$, $\langle2,10,1\rangle$, $\langle2,15,1\rangle$, $\langle2,16,1\rangle$, $\langle2,21,1\rangle$, $\langle2,35,1\rangle$, $\langle3,4,1\rangle$, 
$\langle3,10,1\rangle$, $\langle3,16,1\rangle$, $\langle3,26,1\rangle$, $\langle3,30,1\rangle$, $\langle3,33,1\rangle$, $\langle4,5,1\rangle$, $\langle4,11,1\rangle$, $\langle4,19,1\rangle$, $\langle4,25,1\rangle$, $\langle4,34,1\rangle$, $\langle5,6,1\rangle$, $\langle5,28,1\rangle]$.}

Here, for example, $\langle2,4,1\rangle$ means that the pair $(r_2,r_4)$ is extraspecial and $N_{r_2,r_4}=1$.
It is straightforward to verify the defined above ordering gives the same set of extraspecial pairs.
Thus calculations in MAGMA for $\overline{N}$ correspond to the ordering and structure constants defined in the previous section. The following commands construct elements $n_i$ and $h_i$. \newline
$G:=GroupOfLieType(L);$ \newline
$n:=[ elt\langle G~|~i\rangle : i\ in\ [1..36]];$ \newline
$h:=[TorusTerm(G, i,-1) : i\ in\ [1..36]];$

To obtain the list of matrices of the fundamental reflections one can use the following command:\newline
$w:=[Transpose(i)\ :\ i\ in\ ReflectionMatrices(R)];$

\begin{lem}\label{normalizer}
Let $g\in\overline{G}$ and $n=g^\sigma g^{-1}\in\overline{N}$. Suppose that $H\in \overline{T}$ and $u\in\mathcal{T}$. Then

(i) $Hu\in\overline{N}_{\sigma n}$ if and only if   $H=H^{\sigma n}[n,u];$

(ii) If $H\in \mathcal{H}$ then $Hu\in\overline{N}_{\sigma n}$ if and only if   $[n,Hu]=1$.
\end{lem}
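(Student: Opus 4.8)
The plan is to unwind the definition of $\overline N_{\sigma n}$ — the subgroup of elements of $\overline N$ fixed by the twisted endomorphism $x\mapsto nx^{\sigma}n^{-1}$ that underlies Propositions~\ref{torus} and~\ref{prop2.5}, so that consistently $H^{\sigma n}=nH^{\sigma}n^{-1}$ — and to reduce each of (i) and (ii) to a one-line identity. Two structural facts make the bare computation go through. First, since $\overline G$ has untwisted type $E_6$ and $\sigma$ may be taken to be the standard $q$-power Frobenius, $\sigma$ fixes each generator $n_r$ of the Tits group $\mathcal T$, and hence $u^{\sigma}=u$ for every $u\in\mathcal T$. Second, $\mathcal H$ is an elementary abelian $2$-group, so for odd $q$ every $H\in\mathcal H$ satisfies $H^{\sigma}=H^{q}=H$, whereas for even $q$ we have $\mathcal H=1$ and (ii) is vacuous (the even case is in any event treated separately, see Section~4.2).

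For~(i), first observe that every element of $\overline N$ really can be written as $Hu$ with $H\in\overline T$ and $u\in\mathcal T$, because $\pi(\mathcal T)=\mathcal T/\mathcal H\simeq W=\overline N/\overline T$; thus the statement does describe all of $\overline N_{\sigma n}$. Using $u^{\sigma}=u$ we get
\[
 n(Hu)^{\sigma}n^{-1}=nH^{\sigma}u^{\sigma}n^{-1}=(nH^{\sigma}n^{-1})(nun^{-1})=H^{\sigma n}\,nun^{-1},
\]
so $Hu\in\overline N_{\sigma n}$, i.e.\ $Hu=n(Hu)^{\sigma}n^{-1}$, is equivalent to $Hu=H^{\sigma n}\,nun^{-1}$; multiplying on the right by $u^{-1}$ and recalling $[n,u]=nun^{-1}u^{-1}$ turns this into $H=H^{\sigma n}[n,u]$, as claimed. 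Since $H^{\sigma n}\in\overline T$, this equation can hold only when $[n,u]\in\overline T$, i.e.\ when $\pi(u)\in C_{W,\sigma}(\pi(n))$, but the equivalence itself requires no such restriction.

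For~(ii) let $H\in\mathcal H$. Then $Hu\in\mathcal T$ and, by the two facts above, $(Hu)^{\sigma}=H^{\sigma}u^{\sigma}=Hu$. Hence $Hu\in\overline N_{\sigma n}$ reads $Hu=n(Hu)^{\sigma}n^{-1}=n(Hu)n^{-1}$, which is precisely $[n,Hu]=n(Hu)n^{-1}(Hu)^{-1}=1$. (Alternatively one can feed $H^{\sigma n}=nHn^{-1}$ into part~(i) and simplify to the same conclusion.)

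I do not anticipate a genuine obstacle here: the lemma is a bookkeeping device, and its whole content is the short manipulation above together with the facts that $\sigma$ centralizes $\mathcal T$ and that $\mathcal H$ is $2$-torsion. The only point requiring care is consistency of the conventions $x^{y}=yxy^{-1}$, $[x,y]=xyx^{-1}y^{-1}$ and $H^{\sigma n}=nH^{\sigma}n^{-1}$, and noticing that the hypothesis $H\in\mathcal H$ in~(ii) is exactly what makes $Hu$ a $\sigma$-fixed point of $\overline N$, so that the twisted endomorphism collapses to ordinary conjugation by $n$. This is what will ultimately let membership in $\overline N_{\sigma n}$ — and hence the splitting question — be decided by a finite computation inside $\mathcal T$ (carried out with MAGMA).
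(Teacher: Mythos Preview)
Your proof is correct and follows essentially the same approach as the paper's: both reduce to the observation that $\sigma$ acts trivially on $\mathcal T$ (hence on $\mathcal H$), so $(Hu)^{\sigma n}=H^{\sigma n}u^{n}$ in~(i) and $(Hu)^{\sigma n}=(Hu)^{n}$ in~(ii). Your write-up simply fills in the justification for why $\sigma$ fixes $\mathcal T$ and $\mathcal H$ and makes the conventions explicit, which the paper leaves implicit.
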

\begin{proof} (i)
Since $\sigma$ acts trivially on $\mathcal{T}$, we have  $Hu=(Hu)^{\sigma n}=H^{\sigma n}u^{\sigma n}=
H^{\sigma n}u^{n}$ and $H=H^{\sigma n}[n,u]$.

(ii) Since $\sigma$ acts trivially on $\mathcal{H}$, we have $Hu=(Hu)^{\sigma n}=(Hu)^{n}.$
\end{proof}

\begin{lem}\label{conjugation}
 Let $n=\prod\limits_{j=1}^{k}n_{r_{i_j}}$ and $w=\prod\limits_{j=1}^{k}w_{r_{i_j}}$, where $i_j\in\{1..36\}$. Suppose that
$A=(a_{ij})_{6\times6}$ is the matrix of $w$ in the basis $r_1,r_2,...,r_6$ and $H=(\lambda_1,\lambda_2,\lambda_3,\lambda_4,\lambda_5,\lambda_6)$ is an element of $\overline{T}$. Then the following claims hold:
 
(i) $H^n=(\lambda_1',\lambda_2',\lambda_3',\lambda_4',\lambda_5',\lambda_6')$,
where $\lambda_i'=\lambda_1^{a_{i1}}\lambda_2^{a_{i2}}\lambda_3^{a_{i3}}\lambda_4^{a_{i4}}\lambda_5^{a_{i5}}\lambda_6^{a_{i6}}$ for $1\leqslant{i}\leqslant6$;

(ii) $(Hn)^m=(\lambda_1',\lambda_2',\lambda_3',\lambda_4',\lambda_5',\lambda_6')n^m$,
where $m$ is a positive integer,  $\lambda_i'=\lambda_1^{b_{i1}}\lambda_2^{b_{i2}}\lambda_3^{b_{i3}}\lambda_4^{b_{i4}}\lambda_5^{b_{i5}}\lambda_6^{b_{i6}}$ for $1\leqslant{i}\leqslant6$ and $b_{ij}$ are elements of the matrix $\sum\limits_{k=0}^{m-1}A^k$.
\end{lem}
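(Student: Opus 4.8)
The plan is to derive both parts from the two torus relations already displayed, namely the conjugation rule $n_s h_r(\lambda)n_s^{-1}=h_{w_s(r)}(\lambda)$ and the identity $h_{r_i}(\lambda)h_{r_i}(\mu)=h_{r_i}(\lambda\mu)$, together with one further standard fact: since $E_6$ is simply laced, every root coincides with its coroot, so the map $r\mapsto h_r(\lambda)$ is additive in $r$; concretely, if $r=\sum_{i=1}^{6}c_i r_i$ with integers $c_i$, then $h_r(\lambda)=\prod_{i=1}^{6}h_{r_i}(\lambda^{c_i})$ (in particular $h_{-r}(\lambda)=h_r(\lambda)^{-1}$). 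This is a consequence of the description of $\overline{T}$ as a cocharacter group; see~\cite{Car}. All the manipulations below take place inside the abelian group $\overline{T}$, so factors may be freely permuted and combined.

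For part (i), conjugating $H=\prod_{i=1}^{6}h_{r_i}(\lambda_i)$ termwise and iterating $n_s h_r(\lambda)n_s^{-1}=h_{w_s(r)}(\lambda)$ along the word $n=n_{r_{i_1}}\cdots n_{r_{i_k}}$ gives
\[
H^n=nHn^{-1}=\prod_{i=1}^{6}h_{w(r_i)}(\lambda_i),
\]
where $w=w_{r_{i_1}}\cdots w_{r_{i_k}}$ is precisely the element of $W$ with matrix $A$ in the basis $r_1,\dots,r_6$, so that $w(r_i)=\sum_{j=1}^{6}a_{ji}r_j$ is the $i$-th column of $A$. Applying additivity of $h$ to each factor yields $h_{w(r_i)}(\lambda_i)=\prod_{j=1}^{6}h_{r_j}(\lambda_i^{a_{ji}})$, and collecting, for each fixed $j$, all factors attached to $h_{r_j}$ produces the $j$-th coordinate $\prod_{i=1}^{6}\lambda_i^{a_{ji}}$ of $H^n$; after relabelling this is exactly $\lambda_i'=\lambda_1^{a_{i1}}\cdots\lambda_6^{a_{i6}}$. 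One should check that the convention for $A$ is the column convention matching the action of $w$ on the root space and the transpose taken in the MAGMA construction of the reflection matrices.

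For part (ii), an easy induction on $m$ (using $x^y=yxy^{-1}$, so that moving each successive $H$ leftward past the accumulated power of $n$ gives $H\cdot(nHn^{-1})\cdot(n^2Hn^{-2})\cdots$) yields the telescoping identity
\[
(Hn)^m=\Bigl(\prod_{k=0}^{m-1}H^{n^k}\Bigr)n^m.
\]
Each $n^k$ is again a product of the generators $n_{r_j}$ whose associated product of reflections is $w^k$, with matrix $A^k$, so part (i) gives $H^{n^k}=(\mu_1^{(k)},\dots,\mu_6^{(k)})$ with $\mu_i^{(k)}=\prod_{j=1}^{6}\lambda_j^{(A^k)_{ij}}$. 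Since $\overline{T}$ is abelian, the $i$-th coordinate of $\prod_{k=0}^{m-1}H^{n^k}$ equals $\prod_{j=1}^{6}\lambda_j^{\sum_{k=0}^{m-1}(A^k)_{ij}}=\prod_{j=1}^{6}\lambda_j^{b_{ij}}$, where $b_{ij}$ is the $(i,j)$-entry of $\sum_{k=0}^{m-1}A^k$; hence $(Hn)^m=(\lambda_1',\dots,\lambda_6')n^m$ with $\lambda_i'=\lambda_1^{b_{i1}}\cdots\lambda_6^{b_{i6}}$, as claimed.

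I do not expect a genuine obstacle here. The only non-formal ingredient is the additivity of $r\mapsto h_r(\lambda)$, which is exactly where simply-lacedness of $E_6$ is used; everything else is bookkeeping. The points that demand care are keeping the order of the factors in the word $n$ aligned with the order of the reflections in $w$, and fixing once and for all the row/column convention for $A$ (checking it is consistent with the transposition in the MAGMA setup and with the fact that $A$ records the action of $w$ on the root space). Once these conventions are pinned down, both parts are the direct computations above.
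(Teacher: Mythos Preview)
Your proof is correct and follows essentially the same approach as the paper: both conjugate $H$ termwise via $n_s h_r(\lambda)n_s^{-1}=h_{w_s(r)}(\lambda)$, use the additivity $h_{\sum c_i r_i}(\lambda)=\prod h_{r_i}(\lambda^{c_i})$ (implicit in the paper's step $h_{w_r(r_i)}(\lambda_i)=\prod_j h_{r_j}(\lambda_i^{a_{ji}})$), and then for part~(ii) expand $(Hn)^m=H\,H^{n}\cdots H^{n^{m-1}}n^m$ and sum exponents. The only cosmetic difference is that the paper first reduces part~(i) to the case $n=n_r$ of a single reflection before doing the computation, whereas you iterate over the whole word directly; either way the argument is the same.
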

\begin{proof} Since the matrix of the composition of two linear transformations is the product of their matrices  in the reverse order, it is sufficient to proof the lemma for $n=n_r$.
By formulas, $H^{n_r}=\prod\limits_{i=1}^6h_{r_i}(\lambda_i)^{n_r}=\prod\limits_{i=1}^6h_{w_r(r_i)}(\lambda_i)=\prod\limits_{i=1}^6(\prod\limits_{j=1}^6h_{r_j}(\lambda_i^{a_{ji}}))=
\prod\limits_{i=1}^6h_{r_i}(\prod\limits_{j=1}^6\lambda_j^{a_{ij}})$.

To prove $(ii)$ observe that $(Hn)^m=H^{n^0}H^{n^1}H^{n^2}..H^{n^{m-1}}n^m$. By $(i)$, we know that
the $i$-th row of $A^j$ corresponds to the exponents of $\lambda_1$, $\lambda_2$, $\ldots$ , $\lambda_6$ in $i$-th coordinate of $H^{n^j}$ for every $i,j\in\{1,\ldots,6\}$. To compute the product, we need to sum the exponents up for each coordinate. 
The lemma is proved.
\end{proof}
Since we often use Lemma~\ref{conjugation}, we illustrate its applying with the following example.

\noindent{\bf Example.} Let $w=w_1w_3$ and $n=n_1n_3$. Then it is easy to see that
$w(r_1)=r_3$, $w(r_2)=r_2$, $w(r_3)=-r_1-r_3$, $w(r_4)=r_1+r_3+r_4$, $w(r_5)=r_5$ and $w(r_6)=r_6$.
Therefore, in this case 
$$A=\left(\begin{array}{cccccc} 0 & 0 & -1 & 1 & 0 & 0 \\ 0 & 1 & 0 & 0 & 0 & 0 \\
1 & 0 & -1 & 1 & 0 & 0 \\  0 & 0 & 0 & 1 & 0 & 0 \\  0 & 0 & 0 & 0 & 1 & 0 \\  0 & 0 & 0 & 0 & 0 & 1
\end{array} \right) .$$
Let $H=(\lambda_1, \lambda_2, \lambda_3, \lambda_4,\lambda_5,\lambda_6)\in\overline{T}$. Then by Lemma~\ref{conjugation},
we can use the rows of $A$ to compute $H^n$, namely $H^{n_1n_3}=(\lambda_3^{-1}\lambda_4,\lambda_2,\lambda_1\lambda_3^{-1}\lambda_4,\lambda_4,\lambda_5,\lambda_6)$.
Now let $B=A^0+A+A^2$. Then
$$B=\left(\begin{array}{cccccc} 0 & 0 & 0 & 1 & 0 & 0 \\ 0 & 3 & 0 & 0 & 0 & 0\\
0 & 0 & 0 & 2 & 0 & 0 \\ 0 & 0 & 0 & 3 & 0 & 0\\  0 & 0 & 0 & 0 & 3 & 0 \\ 0 & 0 & 0 & 0 & 0 & 3
\end{array} \right) .$$
This matrix helps us to compute $(Hn)^3$. It is easy to see that $n^3=1$, so $$(Hn)^3=(\lambda_4,\lambda_2^3,\lambda_4^2,\lambda_4^3,\lambda_5^3,\lambda_6^3)n^3=(\lambda_4,\lambda_2^3,\lambda_4^2,\lambda_4^3,\lambda_5^3,\lambda_6^3).$$

The following lemma is clear.
\begin{lem}\label{commutator}
Let $N_1=H_1u_1, N_2=H_2u_2$, where $H_1, H_2\in T$ and $u_1,u_2\in N$. Then
$$N_1N_2=N_2N_1 \text{ if and only if } H_1^{-1}H_1^{u_2}\cdot u_2u_1u_2^{-1}u_1^{-1}=H_2^{-1}H_2^{u_1}.$$ 
In particular, if $[u_1,u_2]=1$, then
$N_1N_2=N_2N_1 \text{ if and only if } H_1^{-1}H_1^{u_2}=H_2^{-1}H_2^{u_1}.$
\end{lem}

\section{Proof of the main results}
The proof of Theorem~\ref{th1} is divided into two subsections. First, we consider maximal tori that do not have a complement in their algebraic normalizer. In these cases we show that the corresponding element of $W$ has preimage in $N$ of the same order. The second subsection is devoted to the construction of the complements for the remaining maximal tori. In these cases the lifts obviously exist in the complements. We use the numeration of the maximal tori as in~\cite[Table~I]{DerF} and Table~\ref{table}. All calculations in $W$ can be verified in MAGMA~\cite{MAGMA} or GAP~\cite{GAP}.

\subsection{Non-complement cases} 
Our strategy is similar in all cases. Throughout this subsection we suppose that $q$ is odd and $T$ is a maximal torus corresponding to the conjugacy class of $w$ in $W$, where $w$ is one from the assertion of Theorem~\ref{th1}. We suppose that there exists a complement $K$ in $N$, in particular $K\simeq C_W(w)$. 
We arrive at a contradiction in each case in the universal group $E_6(q)$. Then we explain that the same contradiction can be obtained in the adjoint group $E_6(q)$.
Throughout this subsection 
$H=(\lambda_1,\lambda_2,\lambda_3,\lambda_4,\lambda_5,\lambda_6)$ is an arbitrary element of $T$.

\textbf{Torus 1.} In this case $w=1$ and $C_W(w)=W$. It was proved \cite[Corollary]{Galt1} that the corresponding maximal torus in algebraic group does not have a complement in $\overline{N}$. Therefore, $T$ does not have a complement in $N$ as well. 

\textbf{Tori 2, 3.} In these cases $w=w_1$ or $w=w_1w_2$, respectively. Observe that both centralizers 
$C_W(w_1)$ and $C_W(w_1w_2)$ contain a subgroup $\langle w_1,w_2,w_5,w_{29}\rangle$ and we deduce a contradiction from it.
Let $N_1,N_2,N_3,N_4$ be preimages of $w_1,w_2,w_5,w_{29}$ in $K$, respectively. Then
$$N_1=H_1n_1,N_2=H_2n_2,N_3=H_3n_5,N_4=H_4n_{29},$$ where
$$H_1=(\mu_1,\mu_2,\mu_3,\mu_4,\mu_5,\mu_6),
H_2=(\alpha_1,\alpha_2,\alpha_3,\alpha_4,\alpha_5,\alpha_6),
H_3=(\beta_1,\beta_2,\beta_3,\beta_4,\beta_5,\beta_6),$$
$H_4=(\delta_1,\delta_2,\delta_3,\delta_4,\delta_5,\delta_6)$ are elements of $T$.
Since $K\simeq C_W(w)$, we have $N_1^2=1$ and $N_1N_i=N_iN_1$ for $i=2,3,4.$
By Lemma~\ref{conjugation}, we get
$$N_1^2=(H_1n_1)^2=H_1H_1^{n_1}h_1=(-\mu_3, \mu_2^2,\mu_3^2,\mu_4^2,\mu_5^2,\mu_6^2)=1,$$
so $1=\mu_2^2=\mu_4^2=\mu_5^2=\mu_6^2=-\mu_3$.
If $j\in\{2,5,29\}$ then using MAGMA~\cite{MAGMA} we see that $[n_1,n_j]=1$. It follows from Lemma~\ref{commutator} that $H_1^{-1}H_1^{n_j}=H_j^{-1}H_j^{n_1}$.
By Lemma~\ref{conjugation} we conclude
$$j=2 \Rightarrow (1,\mu_2^{-2}\mu_4,1,1,1,1)=(\alpha_1^{-2}\alpha_3,1,1,1,1,1),
\text{ so } \mu_4=\mu_2^2=1.$$
$$j=5 \Rightarrow (1,1,1,1,\mu_4\mu_5^{-2}\mu_6,1)=(\beta_1^{-2}\beta_3,1,1,1,1,1),
\text{ so } \mu_6=\mu_5^2\mu_4^{-1}=1.$$
$$j=29 \Rightarrow (\mu_3^{-1}\mu_6,\mu_3^{-1}\mu_6,\mu_3^{-2}\mu_6^2,\mu_3^{-2}\mu_6^2,\mu_3^{-1}\mu_6,1)= (\delta_1^{-2}\delta_3,1,1,1,1,1),$$ so $\mu_3=\mu_6=1.$
This contradicts $\mu_3=-1$.

In the adjoint group $E_6(q)$, an element $H=(\lambda_1,\lambda_2,\lambda_3,\lambda_4,\lambda_5,\lambda_6)$ is the identity if and only if $\lambda_1=\lambda_5=\xi,\lambda_3=\lambda_6=\xi^2,\lambda_2=\lambda_4=1,$ where
$\xi^3=1$. In particular, $\lambda_i^3=1$. Note that $(-1)^3=-1$. Hence, we can consider the elements
$\widetilde{\lambda}_i=\lambda_i^3$ for $\lambda_i\in\{\mu_i,\alpha_i,\beta_i,\delta_i\}$ with $1\leqslant i\leqslant6$. Then we obtain the same equalities and the same contradiction.

Now we provide the lifts for $w_1$ and $w_1w_2$ to prove Theorem~\ref{th2} in these cases.  
Let $\zeta$ be an element of $\overline{\F}_p$ such that $\zeta^{q+1}=-1$. Put $H_1=(\zeta,1,-1,1,1,1)$. We claim that $H_1n_1\in N$  and $(H_1n_1)^2=1$.
By Lemma~\ref{conjugation}, we have 
$H^{n_1}=(\lambda_1^{-1}\lambda_3,\lambda_2,\lambda_3,\lambda_4,\lambda_5,\lambda_6)$.
Hence, $H_1^{\sigma n_1}=(-\zeta^{-q},1,-1,1,1,1)=H_1,$ and so $H_1\in T$.
Using MAGMA~\cite{MAGMA}, 
$$(Hn_1)^2=(-\lambda_3,\lambda_2^2,\lambda_3^2,\lambda_4^2,\lambda_5^2,\lambda_6^2).$$

Thus $(H_1n_1)^2=1$ and $H_1n_1$ is a required lift for $w_1$. 

Similarly, put
$H_2=(\zeta,\zeta,-1,-1,1,1)$. By Lemma~\ref{conjugation}, we have 
$$H^{n_1n_2}=(\lambda_1^{-1}\lambda_3,\lambda_2^{-1}\lambda_4,\lambda_3,\lambda_4,\lambda_5,\lambda_6).$$
Hence, $H_1^{\sigma n_1n_2}=(-\zeta^{-q},-\zeta^{-q},-1,-1,1,1)=H_2,$ and so $H_2\in T$. Using MAGMA~\cite{MAGMA}, 
$$(Hn_1n_2)^2=(-\lambda_3,-\lambda_4,\lambda_3^2,\lambda_4^2,\lambda_5^2,\lambda_6^2).$$

Thus $(H_1n_1n_2)^2=1$ and $H_1n_1n_2$ is a required lift for $w_1w_2$.

\textbf{Torus 5.} In this case $w=w_2w_3w_5$ and $$C_W(w)=\langle w\rangle\times \langle w_{24}\rangle\times\langle x,y,z\rangle\simeq
\mathbb{Z}_2\times\mathbb{Z}_2\times S_4,$$ where
$$x=w_{17}w_{18},y=w_{20}w_{21},z=w_{16}w_{25}, \text{ and } x^2=y^2=z^2=(yz)^2=(xy)^3=(xz)^3=1.$$
Using MAGMA, we have $[n_{24},n_2n_3n_5]=[n_{17}n_{18},n_2n_3n_5]= [h_4h_6n_{20}n_{21},n_2n_3n_5]=[n_{16}n_{25},n_2n_3n_5]=1.$
Hence, Lemma~\ref{normalizer} (ii) yields $w_{24},x,y,z$ are images of  $n_{24},n_{17}n_{18}, h_4h_6n_{20}n_{21}, n_{16}n_{25}$, respectively.
Let $N_1,N_2,N_3,N_4$ be preimages of $w_2w_3w_5,w_{24},w_{20}w_{21},w_{16}w_{25}$
in $K$. Then
$$N_1=H_1n_2n_3n_5,N_2=H_2n_{24},N_3=H_3h_4h_6n_{20}n_{21},N_4=H_4n_{16}n_{25},$$
where
$$H_1=(\mu_1,\mu_2,\mu_3,\mu_4,\mu_5,\mu_6),
H_2=(\alpha_1,\alpha_2,\alpha_3,\alpha_4,\alpha_5,\alpha_6),
H_3=(\gamma_1,\gamma_2,\gamma_3,\gamma_4,\gamma_5,\gamma_6),$$
$H_4=(\delta_1,\delta_2,\delta_3,\delta_4,\delta_5,\delta_6).$
Since $K\simeq C_W(w)$, we get $N_2^2=1$. By Lemma~\ref{conjugation}, it follows that
$$N_2^2=H_2H_2^{n_{24}}h_2h_3h_5=(\alpha_1^2,-\alpha_1\alpha_2^2\alpha_4^{-1}\alpha_6,-\alpha_1\alpha_3^2\alpha_4^{-1}\alpha_6,
\alpha_1^2\alpha_6^2,-\alpha_1\alpha_4^{-1}\alpha_5^2\alpha_6,\alpha_6^2)=1,$$
so $\alpha_1^2=\alpha_6^2=1, \alpha_2^2=\alpha_3^2=\alpha_5^2=-\alpha_4\alpha_1^{-1}\alpha_6^{-1}$.

By Lemma~\ref{commutator}, $N_2N_1=N_1N_2$ is equivalent to $H_2^{-1}H_2^{n_2n_3n_5}=H_1^{-1}H_1^{n_{24}}$.
Then according to Lemma~\ref{conjugation} we have
$$(1,\alpha_2^{-2}\alpha_4,\alpha_1\alpha_3^{-2}\alpha_4,1,\alpha_4\alpha_5^{-2}\alpha_6,1)
=(1,\mu_1\mu_4^{-1}\mu_6,\mu_1\mu_4^{-1}\mu_6,
\mu_1^2\mu_4^{-2}\mu_6^2,\mu_1\mu_4^{-1}\mu_6,1).$$
Then $\mu_1\mu_4^{-1}\mu_6=\alpha_2^{-2}\alpha_4=\alpha_1\alpha_3^{-2}\alpha_4=\alpha_4\alpha_5^{-2}\alpha_6.$
Since $\alpha_2^2=\alpha_3^2=\alpha_5^2$, we obtain
$\alpha_1=1$ and $\alpha_6=1$. From $\alpha_2^2=-\alpha_4\alpha_1^{-1}\alpha_6^{-1}$, we get $\alpha_2^2=-\alpha_4.$

Calculations in MAGMA show $[n_{24},h_4h_6n_{20}n_{21}]=1.$ Hence,
$N_2N_3=N_3N_2$ implies that $H_2^{-1}H_2^{n_{20}n_{21}}=H_3^{-1}H_3^{n_{24}}$.
Then by Lemma~\ref{conjugation}
$$(1,\alpha_2^{-1}\alpha_3\alpha_6^{-1},\alpha_1\alpha_2\alpha_3^{-1}\alpha_6^{-1},
\alpha_1\alpha_6^{-2},\alpha_1\alpha_6^{-2},\alpha_1\alpha_6^{-2})=(1,\gamma_1\gamma_4^{-1}\gamma_6,\gamma_1\gamma_4^{-1}\gamma_6,\gamma_1^2\gamma_4^{-2}\gamma_6^2,\gamma_1\gamma_4^{-1}\gamma_6,1).$$

Since $\alpha_1=\alpha_6=1$, it follows that
$\alpha_2\alpha_3^{-1}=\gamma_1\gamma_4^{-1}\gamma_6$ and $1=\gamma_1\gamma_4^{-1}\gamma_6$.
Therefore $\alpha_2=\alpha_3.$
Using MAGMA we get $[n_{24},n_{16}n_{25}]=1.$ Hence,
$N_2N_4=N_4N_2$ implies $H_2^{-1}H_2^{n_{16}n_{25}}=H_4^{-1}H_4^{n_{24}}$.
Then by Lemma~\ref{conjugation}
$$(1,\alpha_1\alpha_2^{-1}\alpha_3^{-1}\alpha_4\alpha_6^{-1},\alpha_1\alpha_2^{-1}\alpha_3^{-1}\alpha_4\alpha_6^{-1},\alpha_1\alpha_6^{-2},\alpha_1\alpha_6^{-2},\alpha_1\alpha_6^{-2})=(1,\delta_1\delta_4^{-1}\delta_6,\delta_1\delta_4^{-1}\delta_6,\delta_1^2\delta_4^{-2}\delta_6^2,\delta_1\delta_4^{-1}\delta_6,1).
$$
Since $\alpha_1=\alpha_6=1$, it follows that
$\alpha_2^{-1}\alpha_3^{-1}\alpha_4=\delta_1\delta_4^{-1}\delta_6$ and $1=\delta_1\delta_4^{-1}\delta_6$.
Therefore, $\alpha_4=\alpha_2\alpha_3=\alpha_2^2;$ a contradiction with $\alpha_4=-\alpha_2^2$.

Now we provide the lift for $w$.
Let $\zeta$ be an element of $\overline{\F}_p$ such that $\zeta^{q+1}=-1$. Put $H_1=(1,\zeta,\zeta,-1,\zeta,1)$. We claim that $H_1n_2n_3n_5\in N$  and $(H_1n_2n_3n_5)^2=1$.
By Lemma~\ref{conjugation}, we have 
$H^{n_2n_3n_5}=(\lambda_1,\lambda_2^{-1}\lambda_4,\lambda_1\lambda_3^{-1}\lambda_4,
\lambda_4,\lambda_4\lambda_5^{-1}\lambda_6,\lambda_6)$.
Hence, $H_1^{\sigma n_2n_3n_5}=(1,-\zeta^{-q},-\zeta^{-q}-1,-\zeta^{-q},1)=H_1$
and so $H_1\in T$. Using MAGMA we get 
$$(Hn_2n_3n_5)^2=(\lambda_1^2, -\lambda_4, -\lambda_1\lambda_4, \lambda_4^2, -\lambda_4\lambda_6,\lambda_6^2)$$

Thus $(H_1n_2n_3n_5)^2=1$ and $H_1n_2n_3n_5$ is a required lift for $w_2w_3w_5$.

\textbf{Torus 7.} In this case $w=w_1w_3w_4$ and $$C_W(w)=~\langle w\rangle\times\langle w_6, w_{19}w_{26}
\rangle\simeq\mathbb{Z}_4 \times D_8.$$
Put $n=n_1n_3n_4$.
Let $N_2, N_3$ be preimages of $w_6$ and $w_{19}w_{26}$ in $K$, respectively.
Then $$N_2=H_2n_6, N_3=H_3n_{19}n_{26},$$
where
$H_2=(\mu_1,\mu_2,\mu_3,\mu_4,\mu_5,\mu_6)$ and $H_3=(\beta_1,\beta_2,\beta_3,\beta_4,\beta_5,\beta_6)$. Since $[n_1n_3n_4, n_6]=[n_1n_3n_4,n_{16}n_{26}]=1$,
Lemma~\ref{normalizer} (ii) yields $H_2, H_3\in T$.
Since $w_6^2=1$, we obtain $N_2^2=1$. By Lemma~\ref{conjugation}, $$(Hn_6)^2=(\lambda_1^2,\lambda_2^2,\lambda_3^2,\lambda_4^2,\lambda_5^2,-\lambda_5).$$
Therefore, we obtain $\mu_1^2=\mu_2^2=\mu_3^2=\mu_4^2=1$, $\mu_5=-1$.
In particular, $\mu_i^q=\mu_i^{-1}=\mu_i$ for $1\leqslant i\leqslant5$. 
By Lemma~\ref{conjugation},
$$H^{n}=(\lambda_2\lambda_4^{-1}\lambda_5, \lambda_2, \lambda_1\lambda_2\lambda_4^{-1}\lambda_5, \lambda_2\lambda_3\lambda_4^{-1}\lambda_5, \lambda_5, \lambda_6).$$
Since $H_2^{\sigma n}=H_2$,
we have $(\mu_2\mu_4\mu_5,\mu_2,\mu_1\mu_2\mu_4\mu_5,\mu_2\mu_3\mu_4\mu_5,\mu_5,\mu_6^q)=H_2$.
Using $\mu_5=-1$, we conclude that $\mu_2\mu_4=-\mu_1$, $\mu_1\mu_2\mu_4=-\mu_3$, $\mu_2\mu_3=-1$.
From the last equality we get $\mu_2=-\mu_3$. Therefore  $\mu_1=\mu_4$, $\mu_2=-1$ and $\mu_3=1$. Thus $H_2=(\mu_1,-1,1,\mu_1,-1,\mu_6)$.

Since $(w_{19}w_{26})^2=1$, we have $N_3^2=1$.
Applying MAGMA, we get $(n_{19}n_{26})^2=h_1h_4$ and
$$(Hn_{19}n_{26})^2=(-\lambda_1\lambda_3\lambda_4^{-1}\lambda_6,\lambda_2\lambda_5^{-1}\lambda_6^2,\lambda_2^{-1}\lambda_3^2\lambda_5^{-1}\lambda_6^2,-\lambda_1^{-1}\lambda_2^{-1}\lambda_3\lambda_4\lambda_5^{-1}\lambda_6^3, \lambda_2^{-1}\lambda_5\lambda_6^2,\lambda_6^2).$$
So $\beta_4=-\beta_1\beta_3\beta_6$, $\beta_2\beta_6^2=\beta_5$ and $\beta_6^2=1$. Therefore $\beta_2=\beta_5$.

Now $(w_6w_{19}w_{26})^4=1$ and so $(N_2N_3)^4=1$. Observe that $N_2N_3=H_2n_6H_3n_{19}n_{26}=H_2H_3^{n_6}n_6n_{19}n_{26}$.
By Lemma~\ref{conjugation}, 
$H^{n_6}=(\lambda_1,\lambda_2,\lambda_3,\lambda_4,\lambda_5,\lambda_5\lambda_6^{-1}).$
Applying to $H_2H_3^{n_6}$, we obtain  $$H_2H_3^{n_6}=(\mu_1,-1,1,\mu_1,-1,\mu_6)(\beta_1,\beta_2,\beta_3,\beta_4,\beta_2,\beta_2\beta_6^{-1})=(\mu_1\beta_1,-\beta_2,\beta_3,\mu_1\beta_4,-\beta_2,\mu_6\beta_2\beta_6^{-1}).$$ 
Since $(n_6n_{19}n_{26})^4=h_1h_4$, By Lemma~\ref{conjugation} we have
$$(Hn_6n_{19}n_{26})^4=(-\lambda_1^2\lambda_2^{-1}\lambda_3^2\lambda_4^{-2}\lambda_5,\ast).$$
So $1=(N_2N_3)^4=(-\mu_1^2\beta_1^2(-\beta_2^{-1})\beta_3^2\mu_1^{-2}\beta_4^{-2}(-\beta_2),\ast)$. Therefore, $-\beta_1^2\beta_3^2\beta_4^{-2}=1$, which is equivalent to $\beta_4^2=-\beta_1^2\beta_3^2$. As noted above, $\beta_4=-\beta_1\beta_3\beta_6$. Thus $\beta_4^2=\beta_1^2\beta_3^2\beta_6^2=\beta_1^2\beta_3^2$;
a contradiction.

Now we provide the lift for $w$.
Let $\zeta$ be an element of $\overline{\F}_p$ such that $\zeta^{2(q+1)}=-1$. Put $H_1=(-\zeta^{-q},-1,\zeta^{-q^2-q},\zeta,1,1)$. We claim that $H_1n\in N$  and $|H_1n|=4$.
Since $\zeta^{q^3+q^2+q+1}=\zeta^{(2q+2)((q^2+1)/2)}=-1$ and $-\zeta^{-q^3-q^2-q}=\zeta$,
we have
$$H_1^{\sigma n}=(-\zeta^{-q}, -1, \zeta^{-q^2-q}, -\zeta^{-q^3-q^2-q}, 1,1)=H_1.$$ 
Thus $H_1\in T$.
By Lemma~\ref{conjugation},
$$(Hn)^4=(-\lambda_2\lambda_5, \lambda_2^4, (\lambda_2\lambda_5)^2, -(\lambda_2\lambda_5)^3, \lambda_5^4,\lambda_6^4).$$

So $(H_1n)^4=1$ and $H_1n$ is a required lift for $w$.

\textbf{Torus 8.}
In this case $w=w_1w_4w_6w_{36}$ and $C_W(w)\geqslant\langle w_1,w_4,w_6,w_{36}\rangle \simeq\mathbb{Z}_2\times\mathbb{Z}_2\times\mathbb{Z}_2\times\mathbb{Z}_2$.

Put $n=n_1n_4n_6n_3$.  Using MAGMA, we see that $[n,n_1]=[n,n_4]=[n,n_6]=[n,n_{36}]=1$
and hence $n_1,n_4,n_6,n_{36}\in N$.
Let $N_1,N_2,N_3,N_4$ be preimages of $w_1,w_4,w_6,w_{36}$
in $K$, respectively. Then
$$N_1=H_1n_1,N_2=H_2n_4,N_3=H_3n_6,N_4=H_4n_{36},$$ where
$H_1=(\mu_1,\mu_2,\mu_3,\mu_4,\mu_5,\mu_6),$
$H_2=(\alpha_1,\alpha_2,\alpha_3,\alpha_4,\alpha_5,\alpha_6),$
$H_3=(\beta_1,\beta_2,\beta_3,\beta_4,\beta_5,\beta_6),$
$H_4=(\gamma_1,\gamma_2,\gamma_3,\gamma_4,\gamma_5,\gamma_6).$
Since $K\simeq C_W(w)$, we have $N_3^2=1$ and $N_3N_i=N_iN_3$, where $i=1,2,4.$
By Lemma~\ref{conjugation}, 
$$N_3^2=H_3H_3^{n_6}h_6=(\beta_1^2, \beta_2^2,\beta_3^2,\beta_4^2,\beta_5^2,-\beta_5)=1.$$
Therefore $1=\beta_1^2=\beta_2^2=\beta_3^2=\beta_4^2=-\beta_5$.
Using MAGMA we see that $[n_6,n_j]=1$ for each $j\in\{1,4,36\}$, whence 
$H_3^{-1}H_3^{n_j}=H_j^{-1}H_j^{n_6}$.
By Lemma~\ref{conjugation},
$$j=1 \Rightarrow (\beta_1^{-2}\beta_3,1,1,1,1,1)=(1,1,1,1,1,\mu_6^{-2}\mu_5),
\text{ whence } \beta_3=\beta_1^2=1.$$
$$j=4 \Rightarrow (1,1,1,\beta_2\beta_3\beta_4^{-2}\beta_5,1,1)=(1,1,1,1,1,\alpha_6^{-2}\alpha_5),
\text{ whence } \beta_4^2=\beta_2\beta_3\beta_5.$$
$$j=36 \Rightarrow (\beta_2^{-1},\beta_2^{-2},\beta_2^{-2},\beta_2^{-3},\beta_2^{-2},\beta_2^{-1})= (1,1,1,1,1,\gamma_6^{-2}\gamma_5), \text{ whence } \beta_2=1.$$
We derive a contradiction with $1=\beta_4^2=\beta_2\beta_3\beta_5=-1$.

Calculations in MAGMA show that $(n_1n_4n_6n_{36})^4=1$. Thus $n_1n_4n_6n_{36}$ is a required lift for $w$.

\textbf{Torus 11.} In this case $w=w_1w_4w_6w_3$ and 
$C_W(w)=\langle w, w_6,w_{36}\rangle\simeq\mathbb{Z}_4\times\mathbb{Z}_2\times\mathbb{Z}_2$.

Put $n=n_1n_4n_6n_3$.
Let $N_1,N_2,N_3$ be preimages of $w_1w_4w_6w_3,w_6,w_{36}$
in $K$, respectively. Then
$$N_1=H_1n,N_2=H_2n_6,N_3=H_3n_{36},$$ where
$H_1=(\mu_1,\mu_2,\mu_3,\mu_4,\mu_5,\mu_6),$
$H_2=(\beta_1,\beta_2,\beta_3,\beta_4,\beta_5,\beta_6),$
$H_3=(\alpha_1,\alpha_2,\alpha_3,\alpha_4,\alpha_5,\alpha_6).$
Using MAGMA we see that $[n,n_6]=[n,n_{36}]=1$ and hence $H_1, H_2, H_3\in~T$.
Since $K\simeq C_W(w)$, we have $N_2^2=1$ and $[N_3,N_1]=[N_3,N_2]=1$.
The calculations from the previous case (Torus 8) show that
$$N_2^2=1 \text{ implies } 1=\beta_1^2=\beta_2^2=\beta_3^2=\beta_4^2=-\beta_5,$$
and $N_3N_2=N_2N_3$ implies $\beta_2=1.$
By Lemma~\ref{commutator}, we obtain
$H_2^{-1}H_2^{n}=H_1^{-1}H_1^{n_{6}}$.
It follows from Lemma~\ref{conjugation} that
$$(\beta_1^{-1}\beta_3^{-1}\beta_4,1,\beta_1\beta_3^{-2}\beta_4,
\beta_1\beta_2\beta_3^{-1}\beta_4^{-1}\beta_5,1,\beta_5\beta_6^{-2})=(1,1,1,1,1,\mu_6^{-2}\mu_5).$$
Hence, $\beta_1\beta_4=\beta_3^2=1$ and $\beta_4=\beta_1\beta_3$. Therefore,
$1=\beta_1\beta_4=\beta_1\cdot\beta_1\beta_3=\beta_3$ and hence $\beta_4=\beta_1$. Moreover,
$1=\beta_1\beta_2\beta_3^{-1}\beta_4^{-1}\beta_5=\beta_1\beta_4^{-1}\beta_5=\beta_5;$
a contradiction with $\beta_5=-1$.

Now we provide the lift for $w$.
Let $\zeta$ be an element of $\overline{\F}_p$ such that $\zeta^{q^3+q^2+q+1}=~-1$. Put $H_1=(\zeta,-1,\zeta^{q+1},-\zeta^{-q^2},1,1)$. We claim that $H_1n\in N$  and $|H_1n|=4$.
By Lemma~\ref{conjugation}, we get 
$H^{n}=(\lambda_3^{-1}\lambda_4,\lambda_2,\lambda_1\lambda_3^{-1}\lambda_4,
\lambda_1\lambda_2\lambda_3^{-1}\lambda_5,\lambda_5,\lambda_5\lambda_6^{-1})$.
Hence, $H_1^{\sigma n}=(-\zeta^{-q^2-q-1},-1,-\zeta^{-q^2-q},-\zeta^{-q},1,1)^q=(-\zeta^{-q^3-q^2-q},-1,-\zeta^{-q^3-q^2},-\zeta^{-q^2},1,1)=H_1.$
Thus $H_1\in T$. Using MAGMA, we see that
$$(Hn)^4=(-\lambda_2\lambda_5, \lambda_2^4, (\lambda_2\lambda_5)^2, -(\lambda_2\lambda_5)^3, \lambda_5^4,\lambda_5^2).$$
So $(H_1n)^4=1$ and $H_1n$ is a required lift for $w$.

\textbf{Torus 14.} In this case $w=w_3w_2w_4w_{14}$ and $q\equiv-1\pmod{4}$.
Observe that $w_6w_{15}w_{20}\in C_W(w)$. 

Put $n=n_3n_2n_4n_{14}$. Using MAGMA, we see that $[n,h_6n_6n_{15}n_{20}]=1$, and therefore $h_6n_6n_{15}n_{20}\in N$.
Let $N_1$ and $N_2$ be preimages of $w$, $w_6w_{15}w_{20}$ in $K$, respectively.
Then $N_1=H_1n$ and  $N_2=H_2h_6n_6n_{15}n_{20}$, where $H_1=(\mu_1,\mu_2,\mu_3,\mu_4,\mu_5\,\mu_6)$,  $H_2=(\alpha_1,\alpha_2,\alpha_3,\alpha_4,\alpha_5,\alpha_6)$ are elements of $T$.
Using MAGMA, we get $(h_6n_6n_{15}n_{20})^4=h_2h_3$. Now Lemma~\ref{conjugation} implies that
$$(HN_2)^4=(\lambda_1^4,-\lambda_1\lambda_2^2\lambda_3^2\lambda_5^{-2},-\lambda_1^3\lambda_2^2\lambda_3^2\lambda_5^{-2},\lambda_1^4\lambda_4^4\lambda_5^{-4},\lambda_1^4,\lambda_1^2).$$

It follows that $-\alpha_5^2=\alpha_1\alpha_2^2\alpha_3^2$, $\alpha_1^2=1$. Since $[w,d]=1$, we have $H_1^{-1}{H_1}^{N_2}=H_2^{-1}{H_2}^{N_1}$. By Lemma~\ref{conjugation},
$$H^{n}=(\lambda_1,\lambda_3\lambda_4^{-1}\lambda_5,\lambda_1\lambda_3\lambda_4^{-1}\lambda_6,\lambda_3^2\lambda_4^{-1}\lambda_6, \lambda_2^{-1}\lambda_3\lambda_6,\lambda_6),$$
$$ H^{n_6n_{15}n_{20}}=(\lambda_1,\lambda_3\lambda_6^{-1},\lambda_1\lambda_3\lambda_5^{-1},\lambda_1\lambda_2^{-1}\lambda_3\lambda_4\lambda_5^{-1}\lambda_6^{-1},\lambda_1\lambda_2^{-1}\lambda_3\lambda_6^{-1},\lambda_1\lambda_6^{-1}).$$
Whence 
\begin{multline*}
(1,\mu_2^{-1}\mu_3\mu_6^{-1},\mu_1\mu_5^{-1},\mu_1\mu_2^{-1}\mu_3\mu_5^{-1}\mu_6^{-1},\mu_1\mu_2^{-1}\mu_3\mu_5^{-1}\mu_6^{-1},\mu_1\mu_6^{-2})=\\
=(1,\alpha_2^{-1}\alpha_3\alpha_4^{-1}\alpha_5,\alpha_1\alpha_4^{-1}\alpha_6,\alpha_3^2\alpha_4^{-2}\alpha_6,\alpha_2^{-1}\alpha_3\alpha_5^{-1}\alpha_6,1).
\end{multline*}
On the left-hand side, we see that the product of the second and third coordinates equals the fourth one, hence
$(\alpha_2^{-1}\alpha_3\alpha_4^{-1}\alpha_5)(\alpha_1\alpha_4^{-1}\alpha_6)=\alpha_3^2\alpha_4^{-2}\alpha_6$.
So we infer that $\alpha_1\alpha_5=\alpha_2\alpha_3$.
Since $-\alpha_5^2=\alpha_1\alpha_2^2\alpha_3^2$, we obtain $\alpha_1=-1$ and hence $\alpha_5=-\alpha_2\alpha_3$.
Moreover, values of the fourth and fifth coordinates coincide, so $\alpha_3^2\alpha_4^{-2}\alpha_6=\alpha_2^{-1}\alpha_3\alpha_5^{-1}\alpha_6$. Therefore, $\alpha_2\alpha_3\alpha_5=\alpha_4^2$ and hence $\alpha_5^2=-\alpha_4^2$.

Since $H_2$ belongs to the torus, we have $H_2^{\sigma{n}}=H_2$. Therefore $$(\alpha_1^q,(\alpha_3\alpha_4^{-1}\alpha_5)^q,(\alpha_1\alpha_3\alpha_4^{-1}\alpha_6)^q,(\alpha_3^2\alpha_4^{-1}\alpha_6)^q,(\alpha_2^{-1}\alpha_3\alpha_6)^q,\alpha_6^q)=(\alpha_1,\alpha_2,\alpha_3,\alpha_4,\alpha_5,\alpha_6).$$
Whence $\alpha_2=(\alpha_3\alpha_4^{-1}\alpha_5)^q$, $\alpha_3=(\alpha_1\alpha_3\alpha_4^{-1}\alpha_6)^q$ and $\alpha_4=(\alpha_3^2\alpha_4^{-1}\alpha_6)^q$. After squaring up the both sides of the equation for $\alpha_2$ and using $\alpha_4^2=-\alpha_5^2$, we have $-\alpha_3^{2q}=\alpha_2^2$. Since $\alpha_1=-1$,
it is true that $\alpha_4\alpha_3^{-1}=(\alpha_3^2\alpha_4^{-1}\alpha_6)^q(-\alpha_3\alpha_4^{-1}\alpha_6)^{-q}=-\alpha_3^q$ and hence $\alpha_4=-\alpha_3^{q+1}$.
Therefore, $\alpha_3=(\alpha_1\alpha_3\alpha_4^{-1}\alpha_6)^q=-\alpha_3^q\alpha_4^{-q}\alpha_6^q =-\alpha_3^q(-\alpha_3^{-q^2-q})\alpha_6$ and hence $\alpha_6=\alpha_3^{q^2+1}$. On the other hand, we have $\alpha_5=(\alpha_2^{-1}\alpha_3\alpha_6)^q$. Since $\alpha_6^q=\alpha_6$ and $\alpha_5=-\alpha_2\alpha_3$,
we obtain $\alpha_6=-\alpha_2^{q+1}\alpha_3^{1-q}$. We know that $\alpha_2^2=-\alpha_3^{2q}$ and hence $\alpha_2^{q+1}=\alpha_3^{q^2+q}$. Thus $\alpha_6=-\alpha_3^{q^2+1}$; a contradiction with $\alpha_6=\alpha_3^{q^2+1}$.

Calculations in MAGMA show that $n^4=1$ and hence $n$ is the required lift for $w$ in this case.

\textbf{Torus 16.} In this case $w=w_1w_4w_6w_3w_{36}$ and 
$C_W(w)=\langle w\rangle\times\langle w_6,w_{27},w_{36}\rangle\simeq \mathbb{Z}_4\times S_4$.
Observe that $(w_6w_{27})^3=(w_{36}w_{27})^3=(w_6w_{36})^2=1$ and $w_1w_4w_6w_3\in C_W(w)$.
Put $n=n_1n_4n_6n_3n_{36}$. Using MAGMA we see that $[n,n_1n_4n_6n_3]=[n,n_{36}]=[n,n_6]=1.$
Let $N_1,N_2,N_3$ be preimages of $w_1w_4w_6w_3,w_{36},w_{6}$ in $K$, respectively. Then
$$N_1=H_1n_1n_4n_6n_3,N_2=H_2n_{36},N_3=H_3n_{6},$$ where 
$$H_1=(\mu_1,\mu_2,\mu_3,\mu_4,\mu_5,\mu_6),H_2=(\alpha_1,\alpha_2,\alpha_3,\alpha_4,\alpha_5,\alpha_6),H_3=(\beta_1,\beta_2,\beta_3,\beta_4,\beta_5,\beta_6)$$ are elements of $T$.
Since $H\simeq C_W(w)$, we have $N_3^2=1$ and $[N_3,N_1]=[N_3,N_2]=1$.
The calculations for the Torus 8 show that
$$N_3^2=1 \text{ implies } 1=\beta_1^2=\beta_2^2=\beta_3^2=\beta_4^2=-\beta_5,$$
and $N_3N_2=N_2N_3$ implies $\beta_2=1.$
By Lemma~\ref{commutator}, the equation $N_3N_1=N_1N_3$ is equivalent to $H_3^{-1}H_3^{n_1n_4n_6n_3}=H_1^{-1}H_1^{n_{6}}$. By Lemma~\ref{conjugation}, the latter equation yields
$$(\beta_1^{-1}\beta_3^{-1}\beta_4,1,\beta_1\beta_3^{-2}\beta_4,
\beta_1\beta_2\beta_3^{-1}\beta_4^{-1}\beta_5,1,\beta_5\beta_6^{-2})=(1,1,1,1,1,\mu_6^{-2}\mu_5).$$
Hence, $\beta_1\beta_4=\beta_3^2=1$ and $\beta_4=\beta_1\beta_3$. So
$1=\beta_1\beta_4=\beta_1\cdot\beta_1\beta_3=\beta_3$ and $\beta_4=\beta_1$. Finally,
$$1=\beta_1\beta_2\beta_3^{-1}\beta_4^{-1}\beta_5=\beta_1\beta_4^{-1}\beta_5=\beta_5;$$
a contradiction with $\beta_5=-1$.

Now we provide the lift for $w$. Let $\zeta$ be an element of $\overline{\F}_p$ such that $\zeta^{q^3+q^2+q+1}=-1$. 
Put $H_1=(\zeta,1,\zeta^{q+1},-\zeta^{-q^2},-1,\zeta^{q^2+1})$. We claim that 
$H_1n\in N$ and $|H_1n|=4$.
By Lemma~\ref{conjugation}
$$H^{n}=(\lambda_2^{-1}\lambda_3^{-1}\lambda_4,\lambda_2^{-1},
\lambda_1\lambda_2^{-2}\lambda_3^{-1}\lambda_4,\lambda_1\lambda_2^{-2}\lambda_3^{-1}\lambda_5,\lambda_2^{-2}\lambda_5,\lambda_2^{-1}\lambda_5\lambda_6^{-1}).$$
Therefore 
$$H_1^{\sigma{n}}=(-\zeta^{-q^2-q-1},1,-\zeta^{-q^2-q},-\zeta^{-q},-1,-\zeta^{-q^2-1})^q
=(-\zeta^{-q^3-q^2-q},1,-\zeta^{-q^3-q^2},-\zeta^{-q^2},1,-\zeta^{-q^3-q})=H_1.$$
So $H_1\in T$. By Lemma~\ref{conjugation}, 
$$(Hn_1n_4n_6n_3n_{36})^4=(-\lambda_2^{-1}\lambda_5, 1, \lambda_2^{-2}\lambda_5^2, -\lambda_2^{-3}\lambda_5^3, \lambda_2^{-4}\lambda_5^4, \lambda_2^{-2}\lambda_5^2).$$

Thus $(H_1n)^4=1$ and $H_1n$ is a required lift for $w$.
\vspace{0.5em}

\subsection{Complement cases}
Now we will deal with maximal tori of $E_6(q)$ that have a complement in their algebraic normalizer.
Throughout this subsection we suppose that $T$ is a maximal torus corresponding to the conjugacy class of $w$ in $W$.
We use $w$ as in Table~\ref{table}. If $w=w_{i_1}w_{i_2}...w_{i_k}$, where $w_{i_j}$ are fundamental reflections, 
then we put $n=n_{i_1}n_{i_2}...n_{i_k}$. Observe that $n$ is a lift to $N$ for $w$. 

As was mentioned in Section 3, if $q$ is even then $\mathcal{T}\simeq W$. 
So, the orders of $n$ and $w$ are equal. If $K$ is the isomorphic copy of $C_W(w)$ in $\mathcal{T}$
and $x\in K$ then $[x,n]=1$ and hence by Lemma~\ref{normalizer} we have $x\in N$. Therefore, $K$ is a complement for $T$
in $N$.

Throughout this subsection we assume that $q$ is odd and $H=(\lambda_1,\lambda_2,\lambda_3,\lambda_4,\lambda_5,\lambda_6)$ is an arbitrary element of $T$.

\textbf{Torus 4.} In this case $w=w_3w_1$ and $C_W(w)=\langle w,w_5,w_6,w_2,
w_{36},v\rangle\simeq
\mathbb{Z}_3\times((S_3\times S_3):\mathbb{Z}_2)$, where $v=w_1w_4w_{14}w_{29}$, $w_5^v=w_2$, $w_6^v=w_{36}$ and
$\langle w_5,w_6\rangle\simeq\langle w_2,w_{36}\rangle\simeq S_3$. 
Using MAGMA we see that $[n,n_1n_4n_{14}n_{29}]=[n,n_5]=[n,n_6]=[n,n_2]=[n,n_{36}]=1$, whence $n_1n_4n_{14}n_{29},n_5,n_6,n_2,n_{36}\in N$.
Put $$N_1=n_1n_3, N_2=h_{36}n_2, N_3=h_2n_{36}, N_4=n_1n_4n_{14}n_{29}, N_5=h_5h_6n_5, N_6=h_5n_6.$$
We claim that $K=\langle N_1,N_2,N_3,N_4,N_5,N_6\rangle$ is a complement for $T$.
By Lemma~\ref{conjugation},
$$H^{n}=(\lambda_1^{-1}\lambda_3,\lambda_2,
\lambda_1^{-1}\lambda_4,\lambda_4,\lambda_5,\lambda_6).$$ 
Then $h_{36}^{\sigma{n}}=(-1,1,1,-1,1,-1)^{\sigma{n}}=h_{36}^\sigma=h_{36}.$
Similarly, $h_2^{\sigma{n}}=h_2$, $h_5^{\sigma{n}}=h_5$, $h_6^{\sigma{n}}=h_6$.
Therefore $h_{36}, h_2, h_5$ and $h_6$ belong to $T$.
Using MAGMA we have
$$N_2^2=N_3^2=N_5^2=N_6^2=1,\quad (N_2N_3)^3=1, (N_5N_6)^3=1,$$
that is $\langle N_5,N_6\rangle\simeq\langle N_2,N_3\rangle\simeq S_3.$
Further, using MAGMA we obtain
$$N_1^3=1,\quad N_4^2=1,\quad N_1^{N_2}=N_1^{N_3}=N_1^{N_4}=N_1^{N_5}=N_1^{N_6}=N_1.$$
Hence, $K=\langle N_1\rangle\times\langle N_2,N_3,N_4,N_5,N_6\rangle$.
Finally, calculations in MAGMA show that
$N_5^{N_4}=N_2, N_6^{N_4}=N_3$, and so
$K\simeq\mathbb{Z}_3\times((S_3\times S_3):\mathbb{Z}_2)\simeq C_W(w)$, as claimed.

\textbf{Torus 6}. In this case $w=w_1w_3w_{5}$ and $C_W(w)=~\langle w\rangle\times\langle w_2, w_{36} \rangle\simeq\mathbb{Z}_6 \times S_3$.
Using MAGMA we see that $[n, n_2]=[n, n_{36}]=1$. Therefore $n_2, n_{36}\in N$.
Let $\zeta$ be an element of $\overline{\F}_p$ such that $|\zeta|=2(q+1)$. Then it is clear that $\zeta^{(q+1)}=-1$.
Put $H_1=(1,1,1,1,-\zeta,-1)$, $N_1=H_1n$, $N_2=h_{36}n_2$ and $N_3=h_{2}n_{36}$. We claim that $K=\langle N_1, N_2, N_3\rangle$ is a complement for $T$ in $N$.
By Lemma~\ref{conjugation},
$$H^{n}=(\lambda_3^{-1}\lambda_4, \lambda_2, \lambda_1\lambda_3^{-1}\lambda_4, \lambda_4, \lambda_4\lambda_5^{-1}\lambda_6,\lambda_6).$$
Using this equality, we see that $h_2^{n}=h_2$, $h_{36}^{n}=h_{36}$ and hence $h_2, h_{36}\in T$. Furthermore, we have $H_1^{n}=(1,1,1,1,\zeta^{-1},-1)$. So $H_1^{\sigma n}=(1,1,1,1,\zeta^{-q},-1)$.
Since $\zeta^{q}=-\zeta^{-1}$, we infer that $H_1^{\sigma{n}}=(1,1,1,1,-\zeta,-1)=H_1$.
Calculations in MAGMA show that $(N_2)^2=1$, $(N_3)^2=1$, and $(N_2N_3)^3=1$.
It remains to verify that $N_1^6=1$ and $[N_1,N_2]=[N_1,N_3]=1$.
Applying MAGMA, we see that $n^6=h_5$, and therefore Lemma~\ref{conjugation} implies that $$(Hn)^6=(\lambda_4^2, \lambda_2^6, \lambda_4^4, \lambda_4^6, -\lambda_4^3\lambda_6^3, \lambda_6^6).$$ 
This equality yields $N_1^6=1$. By Lemma~\ref{conjugation},
$$H^{n_2}=(\lambda_1,\lambda_2^{-1}\lambda_4,\lambda_3,\lambda_4,\lambda_5,\lambda_6), H^{n_{36}}=(\lambda_1\lambda_2^{-1}, \lambda_2^{-1}, \lambda_2^{-2}\lambda_3,\lambda_2^{-3}\lambda_4,\lambda_2^{-2}\lambda_5, \lambda_2^{-1}\lambda_6).$$
It follows that $H_1^{-1}H_1^{n_{2}}=(1,1,1,1,1,1)=h_{36}^{-1}h_{36}^{n}$ and $H_1^{-1}H_1^{n_{36}}=(1,1,1,1,1,1)=h_{2}^{-1}h_{2}^{n}$. Therefore $[N_1,N_2]=[N_1,N_3]=1$ by Lemma~\ref{commutator}. Thus $\langle N_1, N_2, N_3 \rangle\simeq\mathbb{Z}_6\times S_3$, as claimed.

\textbf{Torus 9.} In this case $w=w_1w_2w_3w_5$ and $C_W(w_1w_2w_3w_5)=\langle w_1w_3\rangle\times \langle w_2,w_5, v\rangle\simeq
\mathbb{Z}_3\times D_8$, where
$v=w_1w_4w_{14}w_{29}, w_2^v=w_5$ and $D_8=(w_2\times w_5)\rtimes v.$
Let
$\xi$ be a primitive $(q^2-1)$th root of unity and $\lambda=\xi^{\frac{q-1}{2}}$. 
Observe that $\lambda^{q+1}=\xi^{(q^2-1)/2}=-1$.
Put
$$N_1=n_1n_3, N_2=H_2n_2, N_3=H_3n_5, N_4=h_1h_4n_1n_4n_{14}n_{29},$$
where $H_2=(-1,\lambda,1,-1,1,-1), H_3=(1,1,1,1,\lambda^{-1},-1)$.

Using MAGMA, we see that $[n,n_1n_3]=[n,n_2]=[n,n_5]=[n,N_4]=1$, whence
$N_1$, $n_2$, $n_5$, $N_4$ belong to $N$.
By Lemma~\ref{conjugation}, 
$$H^n=(\lambda_3^{-1}\lambda_4,\lambda_2^{-1}\lambda_4,\lambda_1\lambda_3^{-1}\lambda_4,
\lambda_4,\lambda_4\lambda_5^{-1}\lambda_6,\lambda_6).$$
Since $\lambda^{q+1}=-1$, we have
$H_2^{\sigma{n}}=(-1,-\lambda^{-1},1,-1,1,-1)^\sigma=H_2$ and $H_3^{\sigma{n}}=(1,1,1,1,-\lambda,-1)^\sigma=H_3$, 
Therefore, $H_2$ and $H_3$ belong to $T$.
We claim that $K=\langle N_1,N_2,N_3,N_4\rangle$ is a complement.

Calculations in MAGMA show that $N_1^3=[N_1,N_4]=1$. Now we prove that $[N_1,N_2]=[N_1,N_3]=1$.
Since $[n_1n_3,n_2]=[n_1n_3,n_5]=1$, it suffices to verify $H_2^{-1}H_2^{N_1}=H_3^{-1}H_3^{N_1}=1$.
By Lemma~\ref{conjugation}, $H^{N_1}=(\lambda_3^{-1}\lambda_4, \lambda_2, \lambda_1\lambda_3^{-1}\lambda_4,\lambda_4,
\lambda_5,\lambda_6)$ and hence $H_2^{N_1}=H_2$ and $H_3^{N_1}=H_3$, as required.

Now we verify that $\langle N_2,N_3,N_4 \rangle\simeq D_8$. Using MAGMA, we see that $N_4^2=1$.
By Lemma~\ref{conjugation}, $(Hn_2)^2=(\lambda_1^2,-\lambda_4,\lambda_3^2,\lambda_4^2,\lambda_5^2,\lambda_6^2)$
and $(Hn_5)^2=(\lambda_1^2,\lambda_2^2,\lambda_3^2,\lambda_4^2,-\lambda_4\lambda_6,\lambda_6^2)$.
Therefore $N_2^2=N_3^2=1$. It remains to verify the equation $N_2N_4=N_4N_3$. Note that $N_2N_4=H_2n_2N_4$. On the other hand, $N_4N_3=H_3^{N^4}N_4n_5$. By Lemma~\ref{conjugation}, 
$$H^{N_4}=(\lambda_1^{-1}\lambda_6, \lambda_5^{-1}\lambda_6^2,\lambda_3^{-1}\lambda_6^2,\lambda_3^{-1}\lambda_6^3, \lambda_2^{-1}\lambda_6^2,\lambda_6).$$
Therefore $H_3^{N_4}=(1,-\lambda,1,-1,1,-1)=H_2$. Using MAGMA, we see that $n_2N_4=N_4n_5$ and hence $N_2N_4=N_4N_3$.
Thus $K\simeq\langle N_1\rangle\times\langle N_2,N_3,N_4\rangle\simeq \mathbb{Z}_3\times D_8\simeq C_W(w)$.

\textbf{Torus 10.} In this case $w=w_1w_5w_3w_6$ and $C_W(w)=\langle w,w_2, w_{36},u,v\rangle
\simeq\mathbb{Z}_3\times S_3\times S_3$,
where $u=w_2w_{26}w_{28}w_{34}, v=w_2w_{24}w_{32}w_{33}$ and $\langle w_2,w_{36}\rangle\simeq\langle u,v\rangle\simeq S_3.$
Put
$$N_1=n, N_2=h_{36}n_2,N_3=h_2n_{36}, N_4=h_1h_6n_2n_{26}n_{28}n_{34}, N_5=h_1h_3h_6n_2n_{24}n_{32}n_{33}.$$
We claim that $K=\langle N_1,N_2,N_3,N_4,N_5\rangle$ is a complement.
Using MAGMA, we see that $[N_1,N_2]=[N_1,N_3]=[N_1,N_4]=[N_1,N_5]=1$ and $N_1^3=1$. Therefore, $N_1, N_2, N_3, N_4, N_5$ belong to $N$
and $K=\langle N_1\rangle\times\langle N_2,N_3,N_4,N_5\rangle$.

Computations in MAGMA show that $N_2^2=N_3^2=N_4^2=N_5^2=1, (N_2N_3)^3=1$ and  $(N_4N_5)^3=1$. Whence
$\langle N_2,N_3\rangle\simeq\langle N_4,N_5\rangle\simeq S_3.$ Finally, we have $[N_2,N_4]=[N_2,N_5]=[N_3,N_4]=[N_3,N_5]=1$. Thus $K\simeq C_W(w)$, as claimed.

\textbf{Torus 12.} In this case $w=w_1w_4w_3w_2$ and 
$C_W(w)=\langle w,w_{6}\rangle\simeq\mathbb{Z}_5\times\mathbb{Z}_2$.
Put
$$N_1=n_1n_4n_3n_2, N_2=h_2h_5n_6.$$
Using MAGMA, we see that $[N_1,N_2]=1$, and therefore $N_2$ belongs to $N$. Moreover, we have $N_1^5=N_2^2=1$, so the group $K=\langle N_1,N_2\rangle\simeq\mathbb{Z}_5\times\mathbb{Z}_2$ is a complement for $T$.

\textbf{Torus 13.} In this case $w=w_3w_2w_5w_4$ and
$C_W(w)=\langle w,w_{17}w_{18},w_{20}w_{21}\rangle\simeq\mathbb{Z}_6\times S_3$.
Let $$N_1=n_3n_2n_5n_4, N_2=h_3h_5n_{17}n_{18}, N_3=h_4h_6n_{20}n_{21}.$$
Using MAGMA, we see that $[N_1,N_2]=[N_1,N_3]=1$. Therefore, $N_2$ and $N_3$ belong to $N$.
Moreover, we have $N_1^6=1, N_2^2=N_3^2=(N_2N_3)^3=1.$ Thus 
$K=\langle N_1,N_2,N_3\rangle\simeq\mathbb{Z}_6\times S_3$ is a required complement for $T$.

\textbf{Torus 14.} In this case $w=w_3w_2w_4w_{14}$, $q\equiv1\pmod{4}$ and $C_W(w)\simeq SL_2(3):\mathbb{Z}_4$.
Moreover, we have $C_W(w)=\langle d,y,c \rangle$, where $d=w_6w_{15}w_{20}$, $y=w_4w_{11}w_{28}$ and $c=w_1w_2w_4w_6w_{31}w_{32}$. Note that $d^4=y^4=c^3=1$, $yd=dy$ and $d^3y^2c=c^2y$.
Using GAP, we see that these relations are determined $C_W(w)$ as abstract group generated by three elements.
Put $n=n_3n_2n_4n_{14}$, $D=h_6n_6n_{15}n_{20}$, $Y=h_4n_4n_{11}n_{28}$ and $C=h_1h_6n_1n_2n_4n_6n_{31}n_{32}$. Using MAGMA, we see that $[n,D]=[n,Y]=[n,C]=1$ and hence $D$, $Y$, $C$ are elements of the normalizer of $T$.
Let $\alpha$ be an element of $\overline{\F}_p$ such that $\alpha^2=-1$ and $H_1=(-1,-1,\alpha,1,\alpha,-1)$, $H_2=(-1,\alpha,1,-1,-\alpha,1)$. Put $N_1=H_1D$ and $N_2=H_2Y$. We claim that $K=\langle N_1, N_2, C\rangle$ is a complement for $T$ in $N$. It suffices to verify that $H_1$ and $H_2$ belong to $T$, and $N_1^4=N_2^4=C^3=1$, $N_1N_2=N_2N_1$ and $N_1^3N_2^2C=C^2N_2$.

By Lemma~\ref{conjugation}, we get 
$H^{n}=(\lambda_1, \lambda_3\lambda_4^{-1}\lambda_5,\lambda_1\lambda_3\lambda_4^{-1}\lambda_6,\lambda_3^2\lambda_4^{-1}\lambda_6,\lambda_2^{-1}\lambda_3\lambda_6,\lambda_6).$

Applying to $H_1$ and $H_2$, we 
obtain $H_1^n=H_1$ and $H_2^n=H_2$. Since $q\equiv1\pmod{4}$, we have $\alpha^q=\alpha$ and hence $H_1^{\sigma n}=H_1$, $H_2^{\sigma n}=H_2$. So $H_1$ and $H_2$ belong to $T$. Since $D^4=Y^4=h_2h_3$, Lemma~\ref{conjugation} implies that 
$$(HD)^4=(\lambda_1^4,-\lambda_1\lambda_2^2\lambda_3^2\lambda_5^{-2},-\lambda_1^3\lambda_2^2\lambda_3^2\lambda_5^{-2},\lambda_1^4\lambda_4^4\lambda_5^{-4},\lambda_1^4,\lambda_1^2)$$ and
$$(HY)^4=(\lambda_1^4,-\lambda_1^3\lambda_2^2\lambda_3^{-2}\lambda_5^2\lambda_6^{-2},-\lambda_1^3\lambda_2^{-2}\lambda_3^2\lambda_5^2\lambda_6^{-2},\lambda_1^4\lambda_5^4\lambda_6^{-4},\lambda_1^2\lambda_5^4\lambda_6^{-4},\lambda_1^2).$$ Now it is easy to see that $N_1^4=N_2^4=1$. Using MAGMA, we obtain $C^3=1$.

It follows from Lemma~\ref{commutator} that the equality $N_1N_2=N_2N_1$ is equivalent to $H_1^{-1}H_1^Y=H_2^{-1}H_2^D[D,Y]$. By Lemma~\ref{conjugation},
we have 
$$H^{-1}H^D=(1,\lambda_2^{-1}\lambda_3\lambda_6^{-1},\lambda_1\lambda_5^{-1},\lambda_1\lambda_2^{-1}\lambda_3\lambda_5^{-1}\lambda_6^{-1},\lambda_1\lambda_2^{-1}\lambda_3\lambda_5^{-1}\lambda_6^{-1},\lambda_1\lambda_6^{-2})$$
and
$$H^{-1}H^Y=(1,\lambda_1\lambda_4^{-1}\lambda_5\lambda_6^{-1},\lambda_1\lambda_4^{-1}\lambda_5\lambda_6^{-1},\lambda_1\lambda_2\lambda_3\lambda_4^{-2}\lambda_5\lambda_6^{-2},\lambda_1\lambda_6^{-2},\lambda_1\lambda_6^{-2}).$$

Therefore, we have $H_1^{-1}H_1^Y=(1,\alpha,\alpha,-1,-1,-1)$ and $H_2^{-1}H_2^D=(1,-\alpha,-\alpha,-1,-1,-1)$.
Since $[D,Y]=h_2h_3$, we infer that $N_1N_2=N_2N_1$.

It remains to verify the equality $N_1^3N_2^2C=C^2N_2$. Observe that $N_1^3N_2^2C=(H_1D)^3(H_2Y)^2C=H_1H_1^DH_1^{D^2}(H_2H_2^Y)^{D^3}D^3Y^2C$.
By Lemma~\ref{conjugation}, we have
$$(HD)^3=(\lambda_1^3,\lambda_2\lambda_3^2\lambda_5^{-1},\lambda_1^2\lambda_2\lambda_3^2\lambda_5^{-2}\lambda_6,\lambda_1^2\lambda_2^{-1}\lambda_3\lambda_4^3\lambda_5^{-3}\lambda_6,\lambda_1^2\lambda_2^{-1}\lambda_3\lambda_6,\lambda_1\lambda_6)D^3,$$
$$(HY)^2=(\lambda_1^2,\lambda_1\lambda_2^2\lambda_4^{-1}\lambda_5\lambda_6^{-1},\lambda_1\lambda_3^2\lambda_4^{-1}\lambda_5\lambda_6^{-1},\lambda_1\lambda_2\lambda_3\lambda_5\lambda_6^{-2},\lambda_1\lambda_5^2\lambda_6^{-2},\lambda_1)Y^2,$$
$$H^{D^3}=(\lambda_1,\lambda_1\lambda_2\lambda_5^{-1},\lambda_1\lambda_2\lambda_6^{-1},\lambda_1^2\lambda_2\lambda_3^{-1}\lambda_4\lambda_5^{-1}\lambda_6^{-1},\lambda_1^2\lambda_2\lambda_3^{-1}\lambda_6^{-1},\lambda_1\lambda_6^{-1}).$$
Therefore, $H_1H_1^DH_1^{D^2}=(-1,-\alpha,1,-1,\alpha,1)$ and $H_2H_2^Y=(1,\alpha,-\alpha,-1,1,-1)$.
Finally, $(H_2H_2^Y)^{D^3}=(1,\alpha,-\alpha,-1,1,-1)$. Then $N_1^3N_2^2C=(-1,1,-\alpha,1,\alpha,-1)D^3Y^2C$.
On the other hand, we have $C^2N_2=H_2^{C^2}C^2Y$. By Lemma~\ref{conjugation}, we obtain
$$H^{C^2}=(\lambda_1^{-1}\lambda_6,\lambda_1^{-1}\lambda_2^{-1}\lambda_4\lambda_5^{-1}\lambda_6,\lambda_1^{-1}\lambda_5^{-1}\lambda_6^2,\lambda_1^{-2}\lambda_2^{-1}\lambda_3\lambda_5^{-1}\lambda_6^2,\lambda_1^{-2}\lambda_3\lambda_5^{-1}\lambda_6,\lambda_1^{-1}).$$

Therefore, $C^2N_2=(-1,1,-\alpha,1,\alpha,-1)C^2Y$. Calculations in MAGMA show that $D^3Y^2C=C^2Y$ and hence 
$N_1^3N_2^2C=C^2N_2$. Thus $K$ is a complement, as claimed.

\textbf{Torus 15}. In this case, $w=w_1w_5w_3w_6w_2$ and $C_W(w)=\langle w\rangle\times\langle w_{24}w_{32}w_{33}, w_{26}w_{28}w_{34}\rangle\simeq\mathbb{Z}_6\times S_3$.
Let $\xi, \zeta$ be elements of $\overline{\F}_p$ such that $|\xi|=2(q^3-1)$ and $|\zeta|=2(q+1)$. 
Observe that $\xi^{q^3-1}=-1$, $\zeta^{q+1}=-1$.
Put $$H_1=(-1,\zeta,1,-1,-\xi^{q-1},-\xi^{q^2-1}),$$ $$H_2=(\xi^{q^2+q},\xi^{q^2+q+1},-\xi^{2q^2+q+1},-\xi^{2(q^2+q+1)},\xi^{(q+1)^2},\xi^{q^2+q}),$$
$$H_3=(\xi^{q+1},\xi^{q^2+q+1},\xi^{(q+1)^2},-\xi^{2(q^2+q+1)},\xi^{(q+1)^2},\xi^{q^2+q}).$$ 
For convenience,
we denote $n_{24}n_{32}n_{33}$ by $u$ and $n_{26}n_{28}n_{34}$ by $v$. Put $N_1=H_1n$, $N_2=H_2h_1h_3h_6u$
and $N_3=H_3h_1h_6v$. We claim that $K=\langle N_1, N_2, N_3 \rangle$ is a complement for $T$ in $N$.
Using MAGMA, we see that  $[h_1h_3h_6u,n]=[h_1h_6v,n]=1$, so $h_1h_3h_6u$, $h_1h_6v$ belong to $N$ by Lemma~\ref{normalizer}.
Now we verify that $H_1$, $H_2$ and $H_3$ belong to $T$.
By Lemma~\ref{conjugation},
$$H^{n}=(\lambda_3^{-1}\lambda_4,\lambda_2^{-1}\lambda_4,\lambda_1\lambda_3^{-1}\lambda_4,\lambda_4,\lambda_4\lambda_6^{-1},\lambda_5\lambda_6^{-1}).$$
Putting $H=H_1$, we obtain $H_1^{n}=(-1,-\zeta^{-1}, 1, -1, \xi^{1-q^2},\xi^{q-q^2})$. Whence $H_1^{\sigma{n}}=(-1,-\zeta^{-q},1,-1,\xi^{q-q^3},\xi^{q^2-q^3})$.
Observe that $\zeta^{-q}=-\zeta$ and $\xi^{q^3}=-\xi$. Therefore, $H_1^{\sigma{n}}=(-1,\zeta,1,-1,-\xi^{q-1},-\xi^{q^2-1})=H_1$. Furthermore,
$$H_2^{n}=(\xi^{q+1},-\xi^{q^2+q+1},\xi^{q^2+2q+1},-\xi^{2(q^2+q+1)},-\xi^{q^2+q+2},\xi^{q+1}).$$
Therefore, $H_2^{\sigma{n}}=(\xi^{q^2+q},-\xi^{q^3+q^2+q},\xi^{q^3+2q^2+q},-\xi^{2(q^3+q^2+q)},-\xi^{q^3+q^2+2q},\xi^{q^2+q})$. Since $\xi^{q^3}=-\xi$, we have $H_2^{\sigma{n}}=(\xi^{q^2+q},\xi^{1+q^2+q},-\xi^{1+2q^2+q},-\xi^{2(1+q^2+q)},\xi^{1+q^2+2q},\xi^{q^2+q})=H_2$. Finally,
$H_3^n=(-\xi^{q^2+1},-\xi^{q^2+q+1},-\xi^{q^2+q+2},-\xi^{2(q^2+q+1)},-\xi^{q^2+q+2},\xi^{q+1})$, so $H_3^{\sigma{n}}=(-\xi^{q^3+q},-\xi^{q^3+q^2+q},-\xi^{q^3+q^2+2q},-\xi^{2(q^3+q^2+q)},-\xi^{q^3+q^2+2q},\xi^{q^2+q})$.
Since $\xi^{q^3}=-\xi$ and $\xi^{2q^3}=\xi^2$, we have $H_3^{\sigma{n}}=(\xi^{1+q},\xi^{1+q^2+q},\xi^{1+q^2+2q},-\xi^{2(1+q^2+q)},\xi^{1+q^2+2q},\xi^{q^2+q})=H_3$.
Since $n^6=h_2$, Lemma~\ref{conjugation} implies that $(H_1n)^6=(\lambda_4^2,-\lambda_4^3,\lambda_4^4,\lambda_4^6,\lambda_4^4,\lambda_4^2)$, whence $|N_1|=6$.
Now we prove that $N_2$ and $N_3$ are involutions. By Lemma~\ref{conjugation}, $(Hu)^2=(\lambda_1\lambda_6^{-1},-\lambda_2^2\lambda_4^{-1},\lambda_3\lambda_4^{-1}\lambda_5\lambda_6^{-1},1,\lambda_1^{-1}\lambda_3\lambda_4^{-1}\lambda_5,\lambda_1^{-1}\lambda_6)$. Putting $H=H_2h_1h_3h_6$, we see that
 $N_2^2=1$. Similarly, we have  
$$(Hv)^2=(\lambda_1\lambda_5^{-1}\lambda_6,-\lambda_2^2\lambda_4^{-1},\lambda_3\lambda_5^{-1},1,\lambda_3^{-1}\lambda_5,\lambda_1\lambda_3^{-1}\lambda_6).$$ 
Applying this to $H_3h_1h_6$ we obtain $N_3^2=1$.

Now we prove that $N_1$ commutes with $N_2$ and $N_3$. By Lemma~\ref{commutator}, this is equivalent to $H_1^{-1}H_1^{N_2}=H_2^{-1}H_2^{N_1}$ and $H_1^{-1}H_1^{N_3}=H_3^{-1}H_3^{N_1}$, respectively.
Using the equations for $H_2^n$ and $H_3^n$, we have $$H_2^{-1}H_2^{N_1}=(\xi^{1-q^2},-1,-\xi^{q-q^2},1,-\xi^{1-q},\xi^{1-q^2})=H_1^{-1}H_1^{N_2},$$ 
$$H_3^{-1}H_3^{N_1}=(-\xi^{q^2-q},-1,-\xi^{1-q},1,-\xi^{1-q},\xi^{1-q^2})=H_1^{-1}H_1^{N_3},$$ as required.

It remains to prove that $(N_2N_3)^3=1$. Observe that 
\begin{multline*}
N_2N_3=H_2h_1h_3h_6uH_3h_1h_6v=H_2h_1h_3h_6(H_3h_1h_6)^{u}uv=\\
=(\ast,\xi^{q^2+q+1},\ast,-\xi^{2(q^2+q+1)},\ast,\ast)(\ast,-\xi^{-(q^2+q+1)},\ast,-\xi^{-2(q^2+q+1)},\ast,\ast)uv
=(\ast,-1,\ast,1,\ast,\ast)uv \end{multline*}
By Lemma~\ref{conjugation},
$$(Huv)^3=(\lambda_4,-\lambda_2^3,\lambda_4^2,\lambda_4^3,\lambda_4^2,\lambda_4).$$ 
Thus $(N_2N_3)^3=1$ and $K$ is a required complement.

\textbf{Torus 17}. In this case $w=w_1w_4w_5w_3w_{36}$ and $C_W(w)=~\langle w\rangle\simeq\mathbb{Z}_{10}$.
Let $\xi$ be an element of $\overline{\F}_p$ such that $|\xi|=(q+1)(q^5-1)$. Let $\zeta=\xi^{(q-1)/2}$. Observe that $\xi^{q(q^5-1)}\xi^{(q^5-1)}=1$ and hence $\zeta^{q^6-q}=\zeta^{1-q^5}$.
Put $$H_1=(\zeta^{q^6+q^3-q},\zeta^{-q^5+1},\zeta^{-q^5+q^4+q^3+1},\zeta^{-q^5+q^4+q^3+q^2+1},\zeta^{q^4+q^3+q^2+1},\zeta^{q^4+q^3+q^2+q+1}).$$
Now we show that $H_1\in T$. By Lemma~\ref{conjugation},
$$H^n=(\lambda_2^{-1}\lambda_3^{-1}\lambda_4,\lambda_2^{-1},\lambda_1\lambda_2^{-2}\lambda_3^{-1}\lambda_4,\lambda_1\lambda_2^{-2}\lambda_3^{-1}\lambda_4\lambda_5^{-1}\lambda_6,\lambda_2^{-2}\lambda_4\lambda_5^{-1}\lambda_6,
\lambda_2^{-1}\lambda_6).$$ 
Putting $H=H_1$, we see that $$H_1^{n}=(\zeta^{q^5+q^2-1},\zeta^{q^5-1},\zeta^{q^5+q^3+q^2-1},\zeta^{q^5+q^3+q^2+q-1},\zeta^{q^5+q^4+q^3+q^2+q-1},\zeta^{q^5+q^4+q^3+q^2+q}).$$
Therefore, 
\begin{multline*}
H_1^{\sigma{n}}=(\zeta^{q^6+q^3-q},\zeta^{q^6-q},\zeta^{q^6+q^4+q^3-q},\zeta^{q^6+q^4+q^3+q^2-q},\zeta^{q^6+q^5+q^4+q^3+q^2-q},\zeta^{q^6+q^5+q^4+q^3+q^2})=\\
=(\zeta^{q^6+q^3-q},\zeta^{1-q^5},\zeta^{-q^5+q^4+q^3+1},\zeta^{-q^5+q^4+q^3+q^2+1},\zeta^{q^4+q^3+q^2+1},\zeta^{q^4+q^3+q^2+q+1})=H_1.
\end{multline*}
Thus $H_1$ belongs to $T$.

Using MAGMA, we see that $n^{10}=h_1h_4h_6$, and therefore Lemma~\ref{conjugation} implies that  $(Hn)^{10}=(-\lambda_2^{-1}\lambda_6^2,1,\lambda_2^{-2}\lambda_6^4,-\lambda_2^{-3}\lambda_6^6,\lambda_2^{-4}\lambda_6^8,-\lambda_2^{-5}\lambda_6^{10})$. 

Putting $H=H_1$, we have $\lambda_2^{-1}\lambda_6^2=\zeta^{q^5-1}\zeta^{2q^4+2q^3+2q^2+2q+2}=\zeta^{q^5+2q^4+2q^3+2q^2+2q+1}=\zeta^{(q+1)(q^4+q^3+q^2+q+1)}=\xi^{(q^5-1)(q+1)/2}=-1$. Thus $(H_1n)^{10}=1$ and $\langle H_1n \rangle$ is a complement for $T$ in $N$.

\textbf{Torus 18.} In this case $w=w_1w_4w_6w_3w_5$ and $C_W(w)=\langle w,w_{36}\rangle\simeq\mathbb{Z}_6\times\mathbb{Z}_2$.
Let $\xi$ and $\zeta$ be elements of $\overline{\F}_p$ such that $\xi^{q+1}=-1$ and $\zeta^{q-1}=-1$. Put $H_1=(\xi, -1, -1, \xi^{-1},-1,\xi)$ and $H_2=(\zeta,-\zeta^2,-\zeta^2,\zeta^3,-\zeta^2,\zeta)$.
Now we verify that $H_1, H_2\in T$. By Lemma~\ref{conjugation},
$$H^{n}=(\lambda_3^{-1}\lambda_4,\lambda_2,\lambda_1\lambda_3^{-1}\lambda_4,\lambda_1\lambda_2\lambda_3^{-1}\lambda_4\lambda_5^{-1}\lambda_6,\lambda_4\lambda_5^{-1}\lambda_6,\lambda_4\lambda_5^{-1}).$$
So $H_1^{\sigma{n}}=(-\xi^{-q}, (-1)^{q},(-1)^{q}, (-\xi)^q,(-1)^q,(-\xi)^{-q})$. Since $\xi^{q+1}=-1$, we have $-\xi^{-q}=\xi$ and $(-\xi)^q=\xi^{-1}$. Therefore, $H_1^{\sigma{n}}=H_1$ and hence $H_1\in T$.
Now $H_2^{\sigma{n}}=(-\zeta^q,-\zeta^{2q},-\zeta^{2q},-\zeta^{3q},-\zeta^{2q},-\zeta^q)$. Since $\zeta^q=-\zeta$, we have $H_2^{\sigma{n}}=(\zeta,-\zeta^2,-\zeta^2,\zeta^3,-\zeta^2,\zeta)=H_2$.

Put $N_1=H_1n_1n_4n_6n_3n_5$ and $N_2=H_2n_{36}$. We claim that $K=\langle N_1, N_2\rangle$ is a complement for $T$ in $N$.
It suffices to show that $|N_1|=6$, $|N_2|=2$ and $[N_1,N_2]=1$.
Using MAGMA, we see that $n^6=h_1h_4h_6$, and therefore Lemma~\ref{conjugation} implies that $(Hn)^6=(-\lambda_2^3,\lambda_2^6,\lambda_2^6,-\lambda_2^9,\lambda_2^6,-\lambda_2^3)$.
Therefore, we obtain $N_1^6=(H_1n)^6=1$. Similarly, we have
$(Hn_{36})^2=(-\lambda_1^2\lambda_2^{-1},1,\lambda_2^{-2}\lambda_3^2,-\lambda_2^{-3}\lambda_4^2,\lambda_2^{-2}\lambda_5^2,-\lambda_2^{-1}\lambda_6^2)$, so $(H_2n_{36})^2=1$. Note that $[n,n_{36}]=1$, so it remains to prove that
$H_2^{-1}H_2^{n}=H_1^{-1}H_1^{n_{36}}$ by Lemma~\ref{commutator}.
Using equations for $H^{n}$ and $H^{n_{36}}$, we obtain $$H_1^{-1}H_1^{n_{36}}=(\xi^{-1}, -1, -1, \xi,-1,\xi^{-1})(-\xi, -1, -1, -\xi^{-1},-1,-\xi)=(-1,1,1,-1,1,-1).$$
and $$H_2^{-1}H_2^{n}=(\zeta^{-1},-\zeta^{-2},-\zeta^{-2},\zeta^{-3},-\zeta^{-2},\zeta^{-1})(-\zeta,-\zeta^{2},-\zeta^{2},-\zeta^{3},-\zeta^{2},-\zeta)=(-1,1,1,-1,1,-1).$$
Thus $[N_1,N_2]=1$ and $\langle N_1, N_2\rangle$ is a complement for $T$, as claimed.

\textbf{Tori 19, 23, 24.} In these cases $C_W(w)$ is cyclic and generated by $w$.
Depending on torus, we have $w=w_2w_5w_3w_4w_6$, $w_1w_4w_6w_3w_2w_5$ or $w_1w_4w_{14}w_3w_2w_6$. Put $N_1=n_2n_5n_3n_4n_6$, $n_1n_4n_6n_3n_2n_5$ or $n_1n_4n_{14}n_3n_2n_6$, respectively. Calculations in MAGMA show that $|N_1|=|w|$ in each case and so $\langle N_1\rangle$ is a complement.

\textbf{Torus 20.} In this case $w=w_{20}w_5w_4w_3w_2$ and $C_W(w)=~\langle w\rangle\simeq\mathbb{Z}_{12}$.
Put $n=n_{20}n_5n_4n_3n_2$. Let $\xi$ be an element of $\overline{\F}_p$ such that $|\xi|=|T|=(q-1)(q^2+1)(q^3+1)$ and $\zeta=\xi^{(q-1)/2}$.
$$H_1=(-1,-\zeta^{q},-\zeta^{-q^4},-\zeta^{-q^4-q^3},\zeta^{-q^4-q^3-q^2-1},\zeta^{-q^3-1}).$$
By Lemma~\ref{conjugation}, 
$$H^n=(\lambda_1, \lambda_1\lambda_3^{-1}\lambda_4\lambda_6^{-1}, \lambda_1\lambda_3^{-1}\lambda_4,\lambda_1^2\lambda_3^{-2}\lambda_4\lambda_5\lambda_6^{-1},\lambda_1^2\lambda_3^{-2}\lambda_4,\lambda_1\lambda_2\lambda_3^{-1}).$$
Therefore, we obtain $H_1^n=(-1,-\zeta,-\zeta^{-q^3},-\zeta^{-q^3-q^2},-\zeta^{q^4-q^3},-\zeta^{q^4+q})$. Hence $H_1^{\sigma n} =(-1,-\zeta^{q},-\zeta^{-q^4},-\zeta^{-q^4-q^3},-\zeta^{q^5-q^4},-\zeta^{q^5+q^2}).$
Observe that $|\xi|=2(q^5+q^3+q^2+1)(q-1)/2$ and hence $\zeta^{q^5+q^3+q^2+1}=-1$. Then $\zeta^{q^5}=-\zeta^{-q^3-q^2-1}$. So $-\zeta^{q^5-q^4}=\zeta^{-q^4-q^3-q^2-1}$
and $-\zeta^{q^5+q^2}=\zeta^{-q^3-q^2-1+q^2}$. 
Thus $H_1^{\sigma n}=H_1$.
Using MAGMA, we have $n^{12}=h_2h_3$, and therefore $(Hn)^{12}=(\lambda_1^{12},-\lambda_1^9,-\lambda_1^{15},\lambda_1^{18},\lambda_1^{12},\lambda_1^6)$.
Thus $(H_1n)^{12}=1$ and $\langle H_1n\rangle$ is a complement for $T$ in $N$.

\textbf{Torus 21.} In this case $w=w_1w_5w_2w_3w_6w_{36}$ and $C_W(w)\simeq(((\mathbb{Z}_3\times\mathbb{Z}_3):\mathbb{Z}_3):Q_8):\mathbb{Z}_3$. Moreover,
we have $C_W(w)=\langle u,v\rangle$, where $u=w_1w_2w_5w_{23}w_{26}w_{31}, v=w_1w_2w_6w_8w_{10}w_{29}$
and
$$
\langle u,v\rangle\simeq\langle a,b~|~a^{12}=b^6=a^8ba^{-8}b^{-1}=(a^6b^{-1})^3=a^6b^2a^6b^{-2}=ba^8(a^{-1}b)^2a^{-1}=1\rangle
$$
Put
$N_1=h_1h_2h_5\cdot n_1n_2n_5n_{23}n_{26}n_{31}, N_2=h_1h_5\cdot n_1n_2n_6n_8n_{10}n_{29},$
and $N=n$. Using MAGMA, we see that $[N,N_1]=[N,N_2]=1$. So $N_1$ and $N_2$ belong to $N$ by Lemma~\ref{normalizer}.
It is easy to verify the relations of $C_W(w)$ for $N_1$ and $N_2$ using MAGMA, but we provide arguments that do not involve computations.

The relations of $C_W(w)$ are valid for $u,v$. Substituting $a=N_1$ and $b=N_2$, 
we obtain each relation up to some element $h\in T$. Since $N_1\in\mathcal{T}$ and $N_2\in\mathcal{T}$, we infer that every such $h\in\mathcal{T}\cap{T}=\mathcal{H}\cap{T}$. Since $T\simeq(q^2+q+1)^3$, 
every element of $T$ has odd order. On the other hand, $\mathcal{H}$ is an elementary abelain 2-group and hence
$\mathcal{H}\cap{T}=1$. Thus all relations of $C_W(w)$ hold true for $K=\langle N_1, N_2\rangle$,
and therefore $K$ is a required complement.

\textbf{Torus 22.} In this case $w=w_1w_4w_6w_3w_5w_{36}$ and $C_W(w)=\langle w,w_{36},w_{24}\rangle\simeq\mathbb{Z}_6\times S_3$. By Lemma~\ref{conjugation},
$$H^n=
(\lambda_2^{-1}\lambda_3^{-1}\lambda_4,\lambda_2^{-1},\lambda_1\lambda_2^{-2}\lambda_3^{-1}\lambda_4,
\lambda_1\lambda_2^{-2}\lambda_3^{-1}\lambda_4\lambda_5^{-1}\lambda_6,
\lambda_2^{-2}\lambda_4\lambda_5^{-1}\lambda_6,\lambda_2^{-1}\lambda_4\lambda_5^{-1}).$$
Using MAGMA, we see that $[n,n_{36}]=1$ and $[n,n_{24}]=h_2h_3h_5$.
Let ${h}=(\alpha^2,\alpha,\alpha,1,\alpha,\alpha^2)$, where
$\alpha\in\overline{\F}_p$ with $\alpha^{q+1}=-1$. Then ${h}^{\sigma{n}}=(\alpha^{-2},\alpha^{-1},\alpha^{-1},1,\alpha^{-1},\alpha^{-2})^\sigma=
(\alpha^{2},-\alpha,-\alpha,1,-\alpha,\alpha^{2})=h_2h_3h_5\cdot h$.
Hence, Lemma~\ref{normalizer} implies that ${h}n_{24}$ belongs to $N$.
Let
$\xi$ be a primitive $(2q^3+2)$th root of unity, $\lambda=-\xi^2$ and $\alpha=\xi^{-q^2+q-1}$. 
Observe that $\lambda^{-q^3}=-\xi^{-2q^3}=-\xi^2=\lambda$.
Put $$N_1=H_1n,N_2=H_2{h}n_{24},N_3=H_3n_{36},$$
where $H_1=(\lambda,1,\lambda^{q+1},\lambda^{-q^2+q+1},\lambda^{q+1},\lambda),
H_2=(-\alpha^{-2},1,1,-\alpha^2,1,-\alpha^{-2}),
H_3=h_2h_3h_5.$ We claim that $K=\langle N_1,N_2,N_3\rangle$ is a complement.
First, we have $H_1^{\sigma{n}}=
(\lambda^{-q^2},1,\lambda^{-q^2+1},\lambda^{-q^2-q+1},\lambda^{-q^2+1},\lambda^{-q^2})^q=
(\lambda,1,\lambda^{1+q},\lambda^{1-q^2+q},\lambda^{1+q},\lambda)=H_1$.
Moreover, $H_{2}^{\sigma{n}}=(-\alpha^2,1,1,-\alpha^{-2},1,-\alpha^2)^q=
(-\alpha^{-2},1,1,-\alpha^2,1,-\alpha^{-2})=H_{2}$ and $H_3^{\sigma{n}}=H_3^q=H_3$.
Hence, $H_1,H_2,H_3\in T$.
Lemma~\ref{conjugation} implies that 
$$H^{n_{24}}=(\lambda_1,\lambda_1\lambda_2\lambda_4^{-1}\lambda_6,\lambda_1\lambda_3\lambda_4^{-1}\lambda_6,
\lambda_1^2\lambda_4^{-1}\lambda_6^2,\lambda_1\lambda_4^{-1}\lambda_5\lambda_6,\lambda_6),$$
$$(Hn_{24})^2=(\lambda_1^2,-\lambda_1\lambda_2^2\lambda_4^{-1}\lambda_6,-\lambda_1\lambda_3^2\lambda_4^{-1}\lambda_6,
\lambda_1^2\lambda_6^2,-\lambda_1\lambda_4^{-1}\lambda_5^2\lambda_6,\lambda_6^2).$$
Therefore, $N_{2}^2=1.$ Using MAGMA, we see that $N_3^2=1$. Observe that
\begin{multline*}N_{2}N_{3}=H_{2}{h}n_{24}H_{3}n_{36}=H_{2}{h}H_{3}^{n_{24}}n_{24}n_{36}=\\
=(-1,\alpha,\alpha,-\alpha^2,\alpha,-1)(1,-1,-1,1,-1,1)n_{24}n_{36}=\\
=(-1,-\alpha,-\alpha,-\alpha^2,-\alpha,-1)n_{24}n_{36}.
\end{multline*}
Calculations in MAGMA show that $(n_{24}n_{36})^3=1$, and therefore
Lemma~\ref{conjugation} yields 
$$(Hn_{24}n_{36})^3=(\lambda_1^2\lambda_2^{-2}\lambda_4\lambda_6^{-1},1,
\lambda_2^{-3}\lambda_3^3,\lambda_1\lambda_2^{-4}\lambda_4^2\lambda_6,\lambda_2^{-3}\lambda_5^3,
\lambda_1^{-1}\lambda_2^{-2}\lambda_4\lambda_6^2).$$ Hence,
$(N_{2}N_{3})^3=1$ and $\langle N_{2},N_{3}\rangle\simeq S_3$.

Furthermore, by Lemma~\ref{commutator} we have
$N_1N_3=N_3N_1$ is equivalent to $H_{3}^{-1}H_{3}^{n}=H_1^{-1}H_1^{n_{36}}$.
Lemma~\ref{conjugation} implies that
$H^{n_{36}}=(\lambda_1\lambda_2^{-1},\lambda_2^{-1},\lambda_2^{-2}\lambda_3,
\lambda_2^{-3}\lambda_4,\lambda_2^{-2}\lambda_5,\lambda_2^{-1}\lambda_6).$
Then $H_1^{n_{36}}=H_1$ and $H_{3}^{n}=H_{3}$. Therefore $N_1N_3=N_3N_1.$
By Lemma~\ref{commutator}, we know that
$$N_1N_2=N_2N_1\text{ is equivalent to } H_1^{-1}H_1^{n_{24}}\cdot [n_{24},n]=(H_2{h})^{-1}(H_2{h})^n.$$
Lemma~\ref{conjugation} implies that
$H^{-1}H^{n_{24}}=(1,\lambda_1\lambda_4^{-1}\lambda_6,\lambda_1\lambda_4^{-1}\lambda_6,
\lambda_1^2\lambda_4^{-2}\lambda_6^2,\lambda_1\lambda_4^{-1}\lambda_6,1)$.
Putting $H=H_1$ and using $\lambda^{q^2-q+1}=-\xi^{2(q^2-q+1)}=-\alpha^{-2}$,
we have $H_1^{-1}H_1^{n_{24}}=(1,-\alpha^{-2},-\alpha^{-2},\alpha^{-4},-\alpha^{-2},1)$.
On the other hand,
$H_2{h}=
(-1,\alpha,\alpha,-\alpha^2,\alpha,-1),$ so
$(H_2{h})^n=(-1,\alpha^{-1},\alpha^{-1},-\alpha^{-2},\alpha^{-1},-1).$ Therefore,
$$(H_2{h})^{-1}(H_2{h})^n=
(1,\alpha^{-2},\alpha^{-2},\alpha^{-4},\alpha^{-2},1).$$
Since $[n,n_{24}]=h_2h_3h_5$, we get
$N_1N_2=N_2N_1$.

Finally, using MAGMA, we see that $(Hn)^6=1$. Thus
$N_1^6=1$ and $K\simeq\langle N_1\rangle\times\langle N_2,N_3\rangle\simeq\mathbb{Z}_6\times S_3$, as claimed.

\textbf{Torus 25.} In this case $w=w_1w_4w_{14}w_3w_2w_{31}$ and $C_W(w)=\langle w^2 \rangle\times\langle i,j,c \rangle $, where $i=w_3w_6w_{19}w_{26}$, $j=w_3w_6w_{14}w_{30}$, $c=w_1w_4w_6w_{13}w_{20}w_{34}$ and $\langle i,j,c\rangle\simeq SL_2(3)$. Observe that $|T|=(q^2-q+1)(q^4+q^2+1)$ is odd.

Put $N_1=n^2$, $N_2=h_1h_2h_5n_3n_6n_{19}n_{26}$, $N_3=h_2h_3h_4h_5n_3n_6n_{14}n_{30}$, $N_4=h_1h_2h_4h_6n_1n_4n_6n_{13}n_{20}n_{34}$.
We claim that $K=\langle N_1, N_2, N_3, N_4 \rangle$ is a complement for $T$ in $N$.
Using MAGMA, we see that $[n,N_2]=[n,N_2]=[n,N_3]=1$ and hence $N_2, N_3, N_4\in N$ by Lemma~\ref{normalizer}.
Clearly the image of $\langle N_1, N_2, N_3, N_4 \rangle$ in $W$ is $C_W(w)$. 
Now we argue as in the case of Torus~21. The group $C_W(w)$ is defined by some relations on $w^2$, $i$, $j$ and $c$.
These relations hold true for $N_1$, $N_2$, $N_3$ and $N_4$ up to some elements $h\in T\cap\mathcal{H}$.
However, since the order of $T$ is odd, we have $h=1$ for each relation. 
Thus $K\simeq C_W(w)$ and $K$ is a required complement.


\begin{table}
\caption{The maximal tori of simply connected group $E_6(q)$\label{table}}
{\small\centering
\begin{tabular}{|l|l|c|l|l|l|c|}
 \hline
 \No  & Representative $w$ & $|w|$ & $|C_W(w)|$ & Structure of $C_W(w)$ & Cyclic structure of $T$ & Splits\\ \hline
  1  & $1$ & 1 & 51840 & $O_5(3):Z_2$ & $(q-1)^6$ & --\\
  2  & $w_1$ & 2 & 1440 & $S_2\times S_6$ & $(q-1)^4\times(q^2-1)$ & --\\
  3  & $w_1w_2$ & 2 & 192 & $D_8\times S_4$ & $(q-1)^2\times(q^2-1)^2$ & --\\
  4  & $w_3w_1$ & 3 & 216 & $Z_3\times((S_3\times S_3):Z_2)$ & $(q-1)^3\times(q^3-1)$& +\\
  5  & $w_2w_3w_5$ & 2 & 96 & $Z_2\times Z_2\times S_4$ & $(q^2-1)^3$& --\\
  6  & $w_1w_3w_5$ & 6 & 36 &$Z_6\times S_3$ & $(q-1)\times(q^2-1)\times(q^3-1)$&+\\
  7 & $w_1w_3w_4$ & 4 & 32 &$Z_4\times D_8$ & $(q-1)^2\times(q^4-1)$& --\\
  8 & $w_1w_4w_6w_{36}$ & 2 & 1152 &$Z_2:(((A_4\times A_4):Z_2):Z_2)$  & $(q+1)^2\times(q^2-1)^2$ & --\\
  9 & $w_1w_2w_3w_5$ & 6 & 24 &$Z_3\times D_8$ & $(q^2-1)\times(q+1)(q^3-1)$& +\\
  10 & $w_1w_5w_3w_6$ & 3 &  108 &$Z_3\times S_3\times S_3$ & $(q-1)\times(q^2+q+1)\times(q^3-1)$& +\\
  11 & $w_1w_4w_6w_3$ & 4 &  16 &$Z_4\times Z_2\times Z_2$ & $(q^2-1)\times(q^4-1)$ & -- \\
  12 & $w_1w_4w_3w_2$ & 5 & 10 &$ Z_2\times Z_5$ & $(q-1)\times(q^5-1)$ & + \\
  13 & $w_3w_2w_5w_4$ & 6 & 36 &$ Z_6\times S_3$ & $(q^2-1)\times(q-1)(q^3+1)$& +\\
  14 & $w_3w_2w_4w_{14}$ & 4 & 96 &$ SL_2(3): Z_4$ & $(q-1)(q^2+1)^2$& + $(q\not\equiv3(4))$\\
  & & & & & & -- $(q\equiv3(4))$\\
  15 & $w_1w_5w_3w_6w_2$ & 6 &  36 &$ Z_6\times S_3$ & $(q^2+q+1)\times(q+1)(q^3-1)$& +\\
  16 & $w_1w_4w_6w_3w_{36}$ & 4 & 96 &$ Z_4\times S_4$ & $(q+1)^2\times(q^4-1)$& --\\
  17 & $w_1w_4w_5w_3w_{36}$ &  10 & 10 &$ Z_{10}$ & $(q+1)(q^5-1)$ & + \\
  18 & $w_1w_4w_6w_3w_5$ & 6  & 12 &$ Z_6\times Z_2$ & $(q^2+q+1)\times(q-1)(q^3+1)$& +\\
  19 & $w_2w_5w_3w_4w_6$ & 8  & 8 &$ Z_8$ & $(q^2-1)(q^4+1)$ & +\\
  20 & $w_{20}w_5w_4w_3w_2$ & 12 & 12 &$ Z_{12}$ & $(q-1)(q^2+1)(q^3+1)$ & + \\
  21 & $w_1w_5w_2w_3w_6w_{36}$ & 3 &  648 &$(((Z_3\times Z_3):Z_3):Q_8):Z_3$
    & $(q^2+q+1)^3$& +\\
  22 & $w_1w_4w_6w_3w_5w_{36}$ & 6 & 36 &$ Z_6\times S_3$ & $(q+1)\times(q^5+q^4+q^3+q^2+q+1)$& +\\
  23 & $w_1w_4w_6w_3w_2w_5$ & 12 &  12 &$ Z_{12}$ & $(q^2+q+1)(q^4-q^2+1)$& +\\
  24 & $w_1w_4w_{14}w_3w_2w_6$ & 9 &  9 &$ Z_9$ & $(q^6+q^3+1)$& +\\
  25 & $w_1w_4w_{14}w_3w_2w_{31}$ & 6 & 72 &$ Z_3\times SL_2(3)$ & $(q^2-q+1)\times(q^4+q^2+1)$& +\\
  \hline
\end{tabular}}
\end{table}


\Addresses
\end{document}